\documentclass[11pt]{amsart}
\usepackage{xcolor}
\usepackage{amsmath}
\usepackage{amsthm}
\usepackage{lipsum}
\usepackage{amsopn}
\usepackage{amssymb}
\usepackage{fullpage}
\usepackage{enumerate}
\numberwithin{equation}{section}
\usepackage{float}
\usepackage{graphicx}
\usepackage{titlesec}
\usepackage{dsfont}
\setlength{\headsep}{0.2in}
\usepackage{fancyhdr}
\usepackage{hyperref}
\usepackage{textcomp}
\pagestyle{fancy}
\usepackage{fancyhdr}
\pagestyle{fancy}

\newtheorem{corollary}{Corollary}[section]
\newtheorem{lemma}{Lemma}[section]
\newtheorem{theorem}{Theorem}[section]
\newtheorem{proposition}{Proposition}[section]

\theoremstyle{definition}
\newtheorem{remark}{Remark}[section]
\newtheorem{example}{Example}[section]

\newtheorem{thm}{Theorem}

\DeclareMathOperator{\Arg}{Arg}

\DeclareMathOperator{\D}{\mathbb{D}}
\DeclareMathOperator{\RE}{Re}
\DeclareMathOperator{\IM}{Im}
\titleformat{\section}
  {\normalfont\scshape}{\thesection}{1em}{\centering}
  \titleformat{\subsection}[runin]
  {\bfseries}{\thesubsection}{1em}{}

\begin{document}

\title{Rates of convergence for holomorphic semigroups of finite shift}

\author{Maria Kourou$^{\S}$}
\thanks{$^{\S}$Partially supported by the Alexander von Humboldt Foundation.}  
\address{Department of Mathematics, Julius-Maximilians University of W\"urzburg, 97074, W\"urzburg, Germany}
\email{maria.kourou@uni-wuerzburg.de}
\author{Eleftherios K. Theodosiadis}
\address{Department of Mathematics, Stockholm University, 106 91, Stockholm, Sweden}
\email{eleftherios@math.su.se}

\author{Konstantinos Zarvalis}
\address{Department of Mathematics, Aristotle University of Thessaloniki, 54124, Thessaloniki, Greece}
\email{zarkonath@math.auth.gr}

\fancyhf{}
\renewcommand{\headrulewidth}{0pt}
\fancyhead[RO,LE]{\small \thepage}
\fancyhead[CE]{\footnotesize }
\fancyhead[CO]{\footnotesize } 

\fancyfoot[L,R,C]{}
\subjclass[2020]{Primary: 30D05, 37F44; Secondary: 30C85, 37C10}
\date{}
\keywords{One-parameter semigroup of holomorphic functions, finite shift, rate of convergence, hyperbolic distance, harmonic measure}

\begin{abstract}
    We study parabolic semigroups of finite shift in the unit disk with regard to their geometric properties and the rate of convergence of their orbits to the Denjoy--Wolff point. We examine this rate in terms of Euclidean distance, hyperbolic distance and harmonic measure. We further discuss the corresponding rates of convergence for parabolic semigroups of positive hyperbolic step and infinite shift.
\end{abstract}

\maketitle

\section{Introduction}
The study of continuous semigroups of holomorphic functions in the unit disk (or from now on \textit{semigroups in $\D$}) began during the nineteenth century and has since offered a fruitful theory with various applications in several fields of mathematics. The starting point of the modern research on semigroups resulted through the works of Berkson and Porta in \cite{berksonporta}. Formally, a semigroup $(\phi_t)$ in $\D$ is a family of holomorphic functions $\phi_t:\D\to\D$, $t\ge0$, which satisfies the following three conditions:

\begin{enumerate}
    \item[(i)] $\phi_0={\textup{id}}_{\D}$,
    \item[(ii)] $\phi_{t+s}=\phi_t\circ\phi_s$, for all $t,s\ge0$,
    \item[(iii)] $\lim_{t\to s}\phi_t(z)=\phi_s(z)$, for all $z\in\D$ and all $s\ge0$.
\end{enumerate}
For a profound presentation of the rich theory of semigroups, the interested reader may refer to \cite{Booksem, AnalyticFlows, shoikhet}.

Given $z\in\D$, the set $\{\phi_t(z):t\ge0\}$ and the curve $\gamma_z:[0,+\infty)\to\D$ with $\gamma_z(t)=\phi_t(z)$ are interchangeably called the \textit{(forward) orbit} (also seen as \textit{trajectory}) of $z$, while $z$ is its \textit{starting point}. Suppose $(\phi_t)$ is a semigroup in $\D$, where for some $t_0 >0$ the iterate $\phi_{t_0}$ is not a conformal automorphism of $\D$ with a fixed point in $\D$. 
Naturally, we are interested in the asymptotic behavior of the orbits, as $t\to+\infty$. The definitive step towards this direction is given by means of the Denjoy--Wolff Theorem (see \cite[Theorem 8.3.1]{Booksem}), according to which there exists a unique $\tau\in\overline{\D}$ such that
$$\lim\limits_{t\to+\infty}\phi_t(z)=\lim\limits_{t\to+\infty}\gamma_z(t)=\tau, \quad\text{for all }z\in\D.$$
This unique for each semigroup point $\tau$ is called the \textit{Denjoy--Wolff point} of the semigroup and its position in the closure of the unit disk provides a first classification within the class of semigroups:
\begin{enumerate}
    \item[(i)] if $\tau\in\D$, then $(\phi_t)$ is characterized as \textit{elliptic},
    \item[(ii)] whereas if $\tau\in\partial\D$, then $(\phi_t)$ is characterized as \textit{non-elliptic}.
\end{enumerate}
During the course of this article, we exclusively deal with non-elliptic semigroups. The fact that their Denjoy--Wolff points are positioned on the unit circle gives birth to a deep theory with interesting results and intricacies. 

In addition to the limit of the orbits itself, it is usual to inquire about the manner of this convergence: the type of the convergence (tangential or non-tangential) or even the very angle by which the orbits land at the Denjoy--Wolff point. A helpful tool towards this goal are the \textit{horodisks} of $\D$. A horodisk $E(\tau,R)$ of $\D$, where $\tau\in\partial\D$ and $R>0$, is a disk of radius $\frac{R}{R+1}$ contained inside $\D$ which is internally tangent to $\partial\D$ at the point $\tau$. To be more concrete, we may define
$$E(\tau,R):=\left\{z\in\D:\frac{|\tau-z|^2}{1-|z|^2}<R\right\}.$$
Through the study of horodisks, we are led to a further distinction within the class of non-elliptic semigroups. A non-elliptic semigroup $(\phi_t)$ in $\D$ with Denjoy--Wolff point $\tau$ is said to be of \textit{finite shift} if for any $z\in\D$, there exists some horodisk $E(\tau,R_z)$, where $R_z$ depends on the starting point $z$, such that the orbit of $z$ does not intersect this horodisk (i.e. $\phi_t(z)\notin E(\tau,R_z)$, for all $t\ge0$). If this condition does not hold, we say that $(\phi_t)$ is of \textit{infinite shift}. For a detailed presentation of holomorphic semigroups of finite shift we refer to \cite[Section 17.7]{Booksem} and the articles \cite{BetsDescript, cordella, ElKhReSh_2010, karamanlis}.

It is quite straightforward from the definition that all the orbits of a semigroup of finite shift must converge to the Denjoy--Wolff point tangentially. This is because each orbit $\gamma_z$, $z\in\D$, must reach the Denjoy--Wolff point $\tau$ traveling between the circles $\partial\D$ and $\partial E(\tau,R_z)$. Furthermore, since $\gamma_z$ stays outside the horodisk $E(\tau,R_z)$, one may roughly say that $\gamma_z$ remains ``very close'' to the unit circle. Hence, semigroups of finite shift are also called \textit{strongly tangential} in the literature.

This general idea about the asymptotic behavior of the orbits in a semigroup of finite shift motivates the current work towards the study of their rate of convergence to the Denjoy--Wolff point.
More specifically, we examine the rate of convergence of orbits for non-elliptic semigroups of either finite or infinite shift. In order to discuss about infinite shift, we restrict to the class of \textit{parabolic semigroups of positive hyperbolic step}, namely those semigroups where the angular derivative $\phi_t^{\prime}(\tau)=1$ for all $t\geq 0$ and the hyperbolic distance $\lim_{t\to \infty} d_{\D}(\phi_t(z), \phi_{t+s}(z)) >0 $, for all $z \in \D$ and for all $s>0 $; for the proper definition and a geometric interpretation we refer to Subsection \ref{sub:Koenigs}.

For every parabolic semigroup of positive hyperbolic step, as outlined in Proposition \ref{sectorprop}, we find that there exists an intrinsic quantity of the semigroup, which we call \textit{inner argument} of $(\phi_t)$ and denote it by $\Theta \in [0,\pi]$. As it turns out, for semigroups of finite shift it holds $\Theta=\pi$, whereas for those of infinite shift $\Theta$ can take any value in $[0,\pi]$.
More information on the inner argument of $(\phi_t)$ follows in Section \ref{sec:finite}.

To study the rate of convergence, the first step is to investigate the Euclidean distance $|\phi_t(z)-\tau|$ and how fast it converges to zero, as $t\to +\infty$. It should be mentioned that the asymptotic behavior of $|\phi_t(z)-\tau|$, as $t \to +\infty$, has been extensively studied, during the past two decades, for non-elliptic semigroups in accordance to the properties of $(\phi_t)$; see e.g. \cite{betshypsem, Bets-Contreras, JacLevR_2011}, \cite[Chapter 7]{shoikhetelin} and \cite[Chapter 16]{Booksem}.
Namely, Betsakos \cite{betsparsem} proved that given a non-elliptic semigroup $(\phi_t)$ with Denjoy--Wolff point $\tau\in\partial\D$
$$|\phi_t(z)-\tau|\le\frac{C}{\sqrt{t}}\, , \quad  \text{for all }t>0\text{ and all }z\in\D,$$
where the constant $C$ depends on the semigroup and the starting point $z$. In fact, better upper bounds as well as lower bounds can be obtained, when restricting to certain types of semigroups. 
We obtain the following result. 


\begin{theorem}\label{theo-eucl}
    Let $(\phi_t)$ be a parabolic semigroup in $\D$ of positive hyperbolic step with Denjoy-Wolff point $\tau\in\partial\D$ and inner argument $\Theta\in(0,\pi]$. The following are true:
    \begin{itemize}
    \item[(i)]If $(\phi_t)$ is of infinite shift, then for every $z\in\D$ and for every $\epsilon>0$, there exist two positive constants $c_1=c_1(z,\epsilon)$ and $c_2=c_2(z)$ such that
    $$\frac{c_1}{t^{\frac{\pi+\Theta}{\Theta}+\epsilon}}\le|\phi_t(z)-\tau|\le\frac{c_2}{t}, \quad\text{for all }t>1.$$
   
    \item[(ii)]If $(\phi_t)$ is of finite shift (and hence $\Theta=\pi$), then for every $z\in\D$, there exist two positive constants $c_1=c_1(z)$ and $c_2=c_2(z)$ such that
    $$\frac{c_1}{t}\le|\phi_t(z)-\tau|\le\frac{c_2}{t}, \quad\text{for all }t>1.$$
    \end{itemize}
\end{theorem}

The upper bound for $|\phi_t(z)-\tau|$ is already known from results found in \cite{betsparsem} and \cite{Bets-Contreras} in conjunction with the geometric properties of $(\phi_t)$. Further details can be found in Subsection \ref{sub:ratesofconv}. 
The lower bound for $|\phi_t(z)-\tau|$, in the case of a non-elliptic semigroup $(\phi_t)$ of finite shift can be also derived by results in \cite{cordella} in combination with the theory of speeds of convergence for non-elliptic semigroups; see e.g. \cite[Chapter 16]{Booksem}. However, we follow a different and more geometric approach.

We further express the hyperbolic rate for non-elliptic semigroups of prescribed inner argument, due to its close relation to the Euclidean rate. More specifically, we obtain the following result on the rate of convergence with the use of hyperbolic distance $d_{\mathbb{D}}$ in the unit disk $\D$. 

\begin{theorem}\label{theo-hyper}
    Let $(\phi_t)$ be a parabolic semigroup in $\D$ of positive hyperbolic step with Denjoy-Wolff point $\tau\in\partial\D$ and inner argument $\Theta\in(0,\pi]$. The following are true:
    \begin{itemize}
        \item[(i)] If $(\phi_t)$ is of infinite shift, then for every $z\in\D$ and for every $\epsilon>0$, there exist two positive constants $c_1=c_1(z,\epsilon)$ and $c_2=c_2(z)$ such that
     $$\log t-c_2\le d_{\D}(z,\phi_t(z))\le \left(\frac{\pi+\Theta}{2\Theta} + \epsilon \right) \log t +c_1, \quad\text{for all }t>1.$$
     \item[(ii)] If $(\phi_t)$ is of finite shift (and hence $\Theta=\pi$), then for every $z\in\D$, there exist two positive constants $c_1=c_1(z)$ and $c_2=c_2(z)$ such that
    $$\log t-c_2\le d_{\D}(z,\phi_t(z))\le \log t +c_1, \quad\text{for all }t>1.$$  
    \end{itemize}    
\end{theorem}

Again, the inequality in the case of finite shift may also be implied through a combination of results found in \cite{cordella} and \cite{Booksem}.

Last but not least, we estimate the rate of convergence for parabolic semigroups of positive hyperbolic step through the scope of potential theory and in particular, in terms of the harmonic measure. The theory of harmonic measure is analyzed in more depth in Subection \ref{sub:harmonicmeas}, but we provide a brief explanation in order to state our last result. 
Consider a simply connected domain $\Omega\subsetneq\mathbb{C}$ and a Borel set $E \subseteq \partial\Omega$. Then, the \textit{harmonic measure} of $E$ with respect to $\Omega$ is the harmonic function in $\Omega$ with boundary values equal to the indicator function $\chi_E$. Given $z\in\Omega$, we use the notation $\omega(z,E,\Omega)$ for the respective harmonic measure. 

Recall that given a semigroup $(\phi_t)$ of finite shift and $z\in\D$, then the orbit $\gamma_z$ converges to the Denjoy--Wolff point $\tau$ strongly tangentially without intersecting all the sufficiently small horodisks tangent at $\tau$. As we already mentioned, we understand that in order for this behavior to take place, the orbit must remain adequately close to $\partial\D$. Therefore, by denoting by $E_1$ and $E_2$ the open half-circles defined by $\pm\tau$, one can naturally obtain that $\omega(\phi_t(z),E_1,\D)\to0$ and $\omega(\phi_t(z),E_2,\D)\to1$, as $t\to+\infty$, or the reverse (depending on if the orbit converges to $\tau$ with angle $0$ or with angle $\pi$). 
Our main intention is to find out how fast this convergence is.

\begin{theorem}\label{theo-harm}
    Let $(\phi_t)$ be a parabolic semigroup in $\D$ of positive hyperbolic step with Denjoy--Wolff point $\tau \in \partial\D$ and inner argument $\Theta\in(0,\pi]$. Suppose that the points $\pm\tau$ separate $\partial\D$ into the two open subarcs $E_1$ and $E_2$. Fix $z\in\D$ and assume that $E_1$ is the subarc that is closer to $\gamma_z$, for large $t\ge0$. Then there exists a positive constant $c=c(z)$ such that
    $$\limsup\limits_{t\to+\infty}\left(t\omega(\phi_t(z),E_2,\D)\right)\le\frac{c}{\Theta}.$$
\end{theorem}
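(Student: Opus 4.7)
The plan is to reduce $\omega(\phi_t(z),E_2,\D)$ to an arctangent via a Cayley map to the upper half-plane, extract the two natural contributions via the identity $1-\RE\phi_t(z)=\tfrac{1}{2}(1-|\phi_t(z)|^2)+\tfrac{1}{2}|1-\phi_t(z)|^2$, and match them against the Euclidean rate (Theorem~\ref{theo-eucl2}) together with a $\Theta$-dependent avoidance inequality coming from Proposition~\ref{sectorprop}.

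Normalise $\tau=1$ and assume, after rotation, that the orbit eventually lies in the upper half of $\D$, so $E_2$ is the lower half-circle. The Cayley map $\Psi(z):=i(1-z)/(1+z)$ sends $\D$ biholomorphically onto the upper half-plane $\mathbb{H}$ with $\Psi(1)=0$, $\Psi(-1)=\infty$, and $\Psi(E_2)=(-\infty,0)$. Setting $\zeta_t:=1-\phi_t(z)$ and $v_t:=\Psi(\phi_t(z))=i\zeta_t/(2-\zeta_t)$, conformal invariance of harmonic measure gives
$$\omega(\phi_t(z),E_2,\D)\,=\,\omega(v_t,(-\infty,0),\mathbb{H})\,=\,\frac{\arg v_t}{\pi}\,=\,\frac{1}{\pi}\arctan\frac{1-\RE\phi_t(z)}{\IM\phi_t(z)}+O(|\zeta_t|).$$
Applying $\arctan x\le x$ together with the identity for $1-\RE\phi_t(z)$ and the tangential character of the approach (namely $\IM\phi_t(z)=(1+\smallO(1))|\zeta_t|$, which itself will follow from the avoidance inequality below), one obtains
$$\omega(\phi_t(z),E_2,\D)\,\le\,\frac{1}{2\pi}\cdot\frac{1-|\phi_t(z)|^2}{|\zeta_t|}+\frac{|\zeta_t|}{2\pi}+\smallO(|\zeta_t|).$$

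Multiplying by $t$ and invoking the Euclidean rate $|\zeta_t|\le c_2/t$ from Theorem~\ref{theo-eucl2} bounds the second summand by $c_2/(2\pi)$ uniformly and absorbs the error term. The remaining task is to control $t\cdot(1-|\phi_t(z)|^2)/|\zeta_t|$, and for this one calls on Proposition~\ref{sectorprop}: the inner argument $\Theta$ is, in essence, the aperture of an asymptotic sector inscribed in the Koenigs image $h(\D)$ near $+\infty$, and pulling this sector back through $h^{-1}$ yields an avoidance region at $\tau$ producing an inequality of the form
$$\frac{1-|\phi_t(z)|^2}{|1-\phi_t(z)|^2}\,\le\,\frac{c(z)}{\Theta}\qquad\text{for all sufficiently large }t.$$
Combined with $|\zeta_t|\le c_2/t$, this gives $t\cdot(1-|\phi_t(z)|^2)/|\zeta_t|\le c(z)c_2/\Theta$, and therefore $\limsup_{t\to\infty}\,t\,\omega(\phi_t(z),E_2,\D)\le c(z)c_2/(2\pi\Theta)+c_2/(2\pi)$, which is bounded by $c(z)/\Theta$ after absorbing the uniform constant (using $\Theta\le\pi$). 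Notice that the avoidance inequality also forces $s_t:=1-\RE\phi_t(z)\le ((c(z)+1)/2)|\zeta_t|^2=\smallO(|\zeta_t|)$, which validates the tangentiality statement used above, so there is no circularity.

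The decisive step is the avoidance inequality. In the finite-shift case $\Theta=\pi$ it reduces exactly to the defining horodisk avoidance $|1-\phi_t(z)|^2\ge R_z(1-|\phi_t(z)|^2)$, which is the desired inequality with constant $1/R_z$. For the general positive-hyperbolic-step situation one must exploit the full geometric content of Proposition~\ref{sectorprop}, translating the sectorial shape of the Koenigs image into a metric avoidance region for the orbit and tracking the aperture $\Theta$ through $h$ so as to produce the explicit factor $1/\Theta$; this is where the main work of the proof lies.
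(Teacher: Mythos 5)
Your reduction is sound up to a point: with the normalisation $\tau=1$ and the Cayley map $\Psi$, one in fact has the exact identity $\omega(\phi_t(z),E_2,\D)=\frac{1}{\pi}\arctan\frac{1-|\phi_t(z)|^2}{2\IM\phi_t(z)}$ (compute $\RE\Psi(\phi_t(z))$ and $\IM\Psi(\phi_t(z))$ directly), so the $O(|\zeta_t|)$ bookkeeping is unnecessary; and the finite-shift case does follow from horodisk avoidance exactly as you describe. The gap is the ``avoidance inequality'' $\frac{1-|\phi_t(z)|^2}{|\tau-\phi_t(z)|^2}\le\frac{c(z)}{\Theta}$ in the general case: it is not merely left unproved (you defer it as ``where the main work of the proof lies''), it is \emph{false} for every semigroup of infinite shift, and the theorem is stated precisely so as to include those. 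By Julia's Lemma the quantity $\lambda_t:=\frac{|\tau-\phi_t(z)|^2}{1-|\phi_t(z)|^2}$ is non-increasing along each orbit, and infinite shift means that the orbits enter every horodisk $E(\tau,R)$, i.e.\ $\lambda_t\to0$; hence $1/\lambda_t\to+\infty$. (A concrete instance covered by the theorem is the quarter-plane Koenigs domain of Example 3.1, which has $\Theta=\pi/2$ and infinite shift.) So your scheme establishes the theorem only for finite shift, where $\Theta=\pi$ and the factor $1/\Theta$ carries no content. Nor can it be repaired by the bounds already proved: what must be controlled is $t(1-|\phi_t(z)|^2)/\IM\phi_t(z)\approx t(1-|\phi_t(z)|^2)/|\tau-\phi_t(z)|$, and combining $1-|\phi_t(z)|\lesssim t^{-2}$ (from the lower bound in Theorem \ref{theo-hyper}) with $|\tau-\phi_t(z)|\gtrsim t^{-\frac{\pi+\Theta}{\Theta}-\epsilon}$ (Theorem \ref{theo-eucl2}) only yields $t^{\pi/\Theta+\epsilon}$, which diverges.

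The paper sidesteps the disk entirely. It transfers the orbit to the half-line $h(z)+t$ in the Koenigs domain, sandwiches $\Omega$ between a sector $V$ of aperture $\theta<\Theta$ containing that half-line and the half-plane $H\supset\Omega$, writes $\omega(\cdot,\partial\Omega^+,\Omega)=1-\omega(\cdot,\partial\Omega^-,\Omega)$, and then uses the maximum principle, domain monotonicity and the explicit sector formula to get $\omega(h(z)+t,\partial\Omega^+,\Omega)\le\frac{1}{\theta}\arctan\frac{y}{x+t}$ with $y=\dist(h(z),\partial H)$; multiplying by $t$ and letting $\theta\uparrow\Theta$ finishes the proof. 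If you wish to remain in the disk, the correct substitute for your avoidance inequality is a bound of the form $t(1-|\phi_t(z)|^2)/\IM\phi_t(z)\le C(z)/\Theta$, which is essentially a restatement of the theorem itself and genuinely requires the Koenigs-domain geometry rather than a horodisk argument.
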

The correct choice between $E_1$ and $E_2$ and which one is ``closer'' to $\gamma_z$ are explained thoroughly in the subsequent sections. Moreover, we will see that the bound is sharp. Furthermore, we will examine the behavior of the harmonic measure in a case where $\Theta=0$.

One may observe that Theorems \ref{theo-hyper} and \ref{theo-harm} are stated in terms of conformally invariant quantities. This conformal invariance plays a pivotal role in their proofs. Indeed, we are going to prove both theorems by means of a suitable function $h$ (more on that in Section \ref{sec:tools}) and passing to a simply connected domain, where our calculations are facilitated by the simple geometry of $h\circ\gamma_z$. On the other hand, Theorem \ref{theo-eucl} lacks this invariance, thus necessitating a restriction to the unit disk. 

The structure of the present article is as follows: In Section \ref{sec:tools}, we provide all the necessary background on semigroups and conformal invariants that deems useful for the proofs of our theorems. Next, in Section \ref{sec:finite}, we introduce the inner argument and then state and prove geometric results concerning semigroups with regard to their inner argument. These results are used later on the rates of convergence. Afterwards, Section \ref{sec:rates} is devoted to the proofs of Theorems \ref{theo-eucl} and \ref{theo-hyper}. Finally, in Section \ref{sec:harmmeas}, we deal exclusively with harmonic measure by proving Theorem \ref{theo-harm} and discussing some extremal cases.

\section{Necessary Tools and Information}\label{sec:tools}

\subsection{ Koenigs Function}\label{sub:Koenigs}
To start with, except for the Denjoy--Wolff point, there is a second powerful tool which facilitates the study of semigroups and their orbits. Indeed, let $(\phi_t)$ be a non-elliptic semigroup in $\D$. Then there exists a unique conformal mapping $h:\D\to\mathbb{C}$ with $h(0)=0$ such that
$$h(\phi_t(z))=h(z)+t, \quad\text{for all }z\in\D\text{ and all }t\ge0 \, ;$$
for further details see e.g  \cite[Chapter 9]{Booksem} and references therein.  
This unique $h$ is called the \textit{Koenigs function} of the semigroup, while the simply connected domain $\Omega:=h(\D)$ its \textit{Koenigs domain} (also seen as \textit{associated planar domain} in the literature).

It is clear from the definition that each orbit $\gamma_z$ is mapped through $h$ onto the horizontal half-line $\{h(z)+t:t\ge0\}$ that stretches towards infinity in the positive direction (i.e. with constant imaginary part and increasing real part). As a result, the Koenigs function allows us to linearize the orbits of a semigroup, thus rendering their study simpler. Furthermore, the definition of $h$ implies at once that $\Omega$ is a convex in the positive direction domain. This signifies that $\Omega+t\subseteq\Omega$, for all $t\ge0$.

We understand that each non-elliptic semigroup corresponds through its Koenigs function to a unique convex in the positive direction simply connected domain. An important piece of information is that the converse is true, as well. More specifically, given any such domain $\Omega$ and considering a Riemann mapping $h:\D\to\Omega$ with $h(0)=0$, then there exists a unique non-elliptic semigroup $(\phi_t)$ defined through the relation $\phi_t(z)=h^{-1}(h(z)+t)$, $z\in\D$, $t\ge0$. Consequently, non-elliptic semigroups come into a one-to-one correspondence with convex in the positive direction simply connected domains. The importance of this observation lies on the fact that it frequently allows us to study the geometric properties of the Koenigs domain $\Omega$ in order to extract conclusions about the behavior of the corresponding semigroup. Moreover, it allows us to construct desired counterexamples. In fact, when we want to disprove a conjecture through a counterexample, instead of constructing a semigroup $(\phi_t)$, we just choose a suitable domain $\Omega$ and work with its geometry. Then, the non-elliptic semigroup resulting from $\Omega$ is the desired one.

Moreover, the Koenigs function $h$ leads to a further classification of non-elliptic semigroups. Since the Koenigs domain $\Omega$ is convex in the positive direction, it is natural to check whether it might be contained in certain standard horizontal domains:
\begin{enumerate}
    \item[(i)] if $\Omega$ is contained inside a horizontal strip, then $(\phi_t)$ is characterized as \textit{hyperbolic},
    \item[(ii)] if $\Omega$ is contained inside a horizontal half-plane, but not in a horizontal strip, then $(\phi_t)$ is characterized as \textit{parabolic of positive hyperbolic step},
    \item[(iii)] otherwise $(\phi_t)$ is characterized as \textit{parabolic of zero hyperbolic step}.
\end{enumerate}

The actual definitions of hyperbolic semigroups, parabolic semigroups and hyperbolic step are different and rely on angular derivatives and hyperbolic distance. 
Further information on the characterization of semigroups, Koenigs functions and domains can be found in \cite{Booksem}.

\subsection{Infinitesimal Generator}
Let $(\phi_t)$ be a semigroup in $\D$. Then there exists a unique holomorphic map $G:\D\to\mathbb{C}$ such that
$$\frac{\partial\phi_t(z)}{\partial t}=G(\phi_t(z)), \quad z\in\D, \quad t\ge0.$$
This mapping is called the \textit{infinitesimal generator} of the semigroup and is inextricably linked to the study of semigroups. Indeed, its properties reveal a lot of information about the associated semigroup. A useful relation that intertwines the infinitesimal generator $G$ and the Koenigs function $h$ of a non-elliptic semigroup is the following:
$$G(z)=\frac{1}{h'(z)}, \quad\text{for all }z\in\D.$$
In this present article, we will not really delve into the rich theory of infinitesimal generators. We will rather only need the above two relations.

\subsection{ Rates of Convergence}\label{sub:ratesofconv}
A fundamental field of research on non-elliptic semigroups of holomorphic functions in $\D$ is the rate of convergence of the orbits. It is of great interest to study how fast an orbit $\gamma_z$ is pushed away from the starting point $z$ or how fast it is pulled towards the Denjoy--Wolff point $\tau\in\partial\D$. As we mentioned in the Introduction, a remarkable result concerning the latter rate that first appeared in \cite{betsparsem} is that any non-elliptic semigroup $(\phi_t)$ in $\D$ satisfies 
$$|\phi_t(z)-\tau|=|\gamma_z(t)-\tau|\le\frac{C}{\sqrt{t}},\quad\text{for all }z\in\D\text{ and all }t>0,$$
where the constant $C>0$ depends on the starting point $z$. In the same article, the author improves this bound when restricting to more exclusive types of semigroups. Indeed, the rate $t^{-\frac{1}{2}}$ may be changed to $t^{-1}$ for parabolic semigroups of positive hyperbolic step and to $e^{-\mu t}$ for hyperbolic semigroups, where $\mu$ is a positive number depending on the semigroup and is called its \textit{spectral value} (for more information we refer the interested reader to \cite[Definition 8.3.2]{Booksem}). However, the bound $t^{-\frac{1}{2}}$ is the best we can expect in the general case, in the sense that there exists a parabolic semigroup of zero hyperbolic step whose orbits acquire this rate. The existence of a lower bound for the rate of convergence has also been explored in \cite{Bets-Contreras} in the cases where the Koenigs domain satisfies certain geometric properties.

\subsection{ Hyperbolic Geometry} In the current subsection, we proceed to a deeper examination of certain hyperbolic quantities such as the hyperbolic metric, density and distance in the unit disk. For a profound presentation of the rich theory of hyperbolic geometry we refer to \cite{bearmin}, \cite[Chapter 5]{Booksem} and references therein.

The \textit{hyperbolic metric} in the unit disk $\D$ is given by the formula
$$\lambda_{\D}(z)|dz|=\frac{|dz|}{1-|z|^2}, \quad z\in\D,$$
where the function $\lambda_{\D}$ is called the \textit{hyperbolic density} of $\D$. Let $\gamma:(\alpha,\beta)\to\D$ be a piecewise smooth curve. Then, for $\alpha<t_1\le t_2<\beta$, the hyperbolic length $l_{\D}(\gamma;[t_1,t_2])$ of $\gamma$ between its points $\gamma(t_1)$ and $\gamma(t_2)$ is given through the formula 
$$l_{\D}(\gamma;[t_1,t_2])=\int\limits_{t_1}^{t_2}\lambda_{\D}(\gamma(t))|\gamma'(t)|dt.$$
For $z,w\in\D$, their \textit{hyperbolic distance} in $\D$ induced by the corresponding hyperbolic metric is given through
$$d_{\D}(z,w)=\inf\limits_{\gamma}\int\limits_{\gamma}\lambda_{\D}(\zeta)|d\zeta|,$$
where the infimum is taken over all piecewise smooth curves $\gamma$ joining $z$ to $w$ inside the unit disk. Evidently, given a piecewise smooth curve $\gamma:[0,1]\to\D$ with $\gamma(0)=z$ and $\gamma(1)=w$, it is true that $d_{\D}(z,w)\le l_{\D}(\gamma;[0,1])$. Equality holds when $\gamma$ is a \textit{hyperbolic geodesic}. In fact, it can be computed that the hyperbolic distance may be given through the formula
$$d_{\D}(z,w)=\frac{1}{2}\log\frac{|1-\overline{z}w|+| z-w|}{|1-\overline{z}w|-| z-w|}.$$

Let $\Omega\subsetneq\mathbb{C}$ be a simply connected domain and consider $f:\Omega\to\D$ to be a Riemann mapping. Then, the \textit{hyperbolic metric} in $\Omega$ is given by
$$\lambda_\Omega(z)|dz|=\lambda_{\D}(f(z))|f'(z)||dz|, \quad z\in\Omega.$$
Again, the function $\lambda_\Omega$ is called the \textit{hyperbolic density} of $\Omega$. It can be proved that the definition is independent of the choice of the Riemann mapping $f$.

For $z,w\in\Omega$, their \textit{hyperbolic distance} in $\Omega$ may be given via the relation
$$d_\Omega(z,w)=d_{\D}(f(z),f(w)).$$
Hence, the hyperbolic distance is conformally invariant, allowing us to translate problems from one domain to another. 
An important property of hyperbolic distance is its \textit{domain monotonicity property}. Given two simply connected domains $\Omega_1$ and $\Omega_2$ with $\Omega_1\subseteq \Omega_2 \subsetneq \mathbb{C}$, it follows
$$d_{\Omega_1}(z,w)\ge d_{\Omega_2}(z,w), \quad\text{for all }z,w\in\Omega_1.$$


Before we end the subsection, we give explicit formulas for the hyperbolic distance in two specific domains, needed for our proofs. These formulas may be deduced by the conformal invariance of the hyperbolic distance and the simplicity of the respective Riemann mappings.

Firstly, let $\rho\in\mathbb{R}$ and consider the upper horizontal half-plane $H_{\rho}:=\{\zeta\in\mathbb{C}:\IM \zeta>\rho\}$. Then,
\begin{equation}\label{half-plane}
 d_{H_{\rho}}(z,w)=\frac{1}{2}\log\frac{\left|z-\overline{w}-2\rho i\right|+\left|z-w\right|}{\left|z-\overline{w}-2\rho i\right|-\left|z-w\right|}, \quad\text{for all }z,w\in H_{\rho}.   
\end{equation}

In a similar fashion, we may write down the hyperbolic distance in a lower horizontal half-plane $H_{\rho}^-:=\{\zeta\in\mathbb{C}:\IM \zeta<\rho\}$. Lastly, let $\theta\in(0,\pi]$ and consider the \textit{upper horizontal angular sector} $S_{\theta}:=\{\zeta\in\mathbb{C}:\Arg\zeta\in(0,\theta)\}$. Then,
\begin{equation}\label{sector}
d_{S_{\theta}}(z,w)=\frac{1}{2}\log\frac{\left|z^{\frac{\pi}{\theta}}-\overline{w}^{\frac{\pi}{\theta}}\right|+\left|z^{\frac{\pi}{\theta}}-w^{\frac{\pi}{\theta}}\right|}{\left|z^{\frac{\pi}{\theta}}-\overline{w}^{\frac{\pi}{\theta}}\right|-\left|z^{\frac{\pi}{\theta}}-w^{\frac{\pi}{\theta}}\right|}, \quad\text{for all }z,w\in S_{\theta}.
\end{equation}

A similar computation could be made for a lower horizontal angular sector $S_{\theta}^-:=\{\zeta\in\mathbb{C}:\Arg\zeta\in(-\theta,0)\}$. We use the notations $H_{\rho}, H_{\rho}^-, S_{\theta}$ and $S_{\theta}^-$ frequently during the course of the following sections. For the sake of convenience, we use the notation $\mathbb{H}$ for the standard upper half-plane $H_0$.

\subsection{ Harmonic Measure} \label{sub:harmonicmeas}
A second conformal invariant quantity we use throughout the course of the current article is the \textit{harmonic measure}. Even though it might not seem that way at first glance, harmonic measure can be utilized as a tool in a multitude of different areas of mathematics producing stunning results. For an in depth exploration of its capabilities see \cite[Chapter 7]{Booksem}, \cite{margarnett} or \cite{ransford}.

Having already defined harmonic measure in the Introduction, we pass to several properties we use in the proof of Theorem \ref{theo-harm}. First of all, the harmonic measure is conformally invariant. Therefore, if $\Omega_1,\Omega_2$ are two simply connected domains in $\mathbb{C}$ and $f:\Omega_1\to\Omega_2$ is conformal, then
$$\omega(z,E,\Omega_1)=\omega(f(z),f(E),\Omega_2), \quad\text{for all }z\in\Omega_1,$$
for each Borel subset $E$ of $\partial\Omega_1$. Note here that in the case of non-Jordan domains, the sets $E,f(E)$ are defined in the Carath\'{e}odory sense and consist of prime ends.

Moreover, for each Borel subset $E$ of $\Omega$ the function $\omega(\cdot,E,\Omega)$ is a harmonic function in $\Omega$,  while simultaneously for each $z\in\Omega$ the function $\omega(z,\cdot,\Omega)$ is a Borel probability measure on $\partial\Omega$. 

In addition, the harmonic measure conceals a domain monotonicity property as well. To be specific, let $\Omega_1\subset\Omega_2\subsetneq\mathbb{C}$ be simply connected and $E\subset\partial\Omega_1\cap\partial\Omega_2$ be a Borel set. Then
$$\omega(z,E,\Omega_1)\le\omega(z,E,\Omega_2)\, , \quad \text{for all }z\in\Omega_1.$$
This monotonicity property can be stated more precisely by means of the so-called \textit{Strong Markov Property} (see \cite[p.88]{portstone}). In particular, given $\Omega_1,\Omega_2$ and $E$ as above, we have for $z\in\Omega_1$
$$\omega(z,E,\Omega_2)=\omega(z,E,\Omega_1)+\int\limits_{\partial\Omega_1\cap \Omega_2}\omega(\zeta,E,\Omega_2)\cdot\omega(z,d\zeta,\Omega_1).$$


In Section \ref{sec:harmmeas} we use the formula for the harmonic measure inside an angular sector. Let $U=\{\zeta\in\mathbb{C}:\Arg\zeta\in(\alpha,\beta)\}$, for $-\pi\le\alpha<\beta\le\pi$. According to \cite[p.100]{ransford}, it holds
$$\omega(z,\{\Arg\zeta=\beta\},U)=\frac{\Arg z-\alpha}{\beta-\alpha}=1-\omega(z,\{\Arg\zeta=\alpha\},U), \quad z\in U.$$

\subsection{ Conformality at the Boundary}\label{sub:conformality}

Our first task will be to give a geometric description of the Koenigs domain of a semigroup in $\D$ of finite shift. This description will involve the classical notions of angular limits and conformality at the boundary. We mostly follow \cite{Jenkins} as far as the notation is concerned. 

Suppose that $f$ maps an upper half-plane conformally into some other upper half-plane. The theory can be written for any kind of half-plane, but our study concerns solely horizontal half-planes and especially the upper ones. Suppose that $\angle\lim_{z\to\infty}f(z)=\infty$ and $\angle\lim_{z\to\infty}\frac{z}{f(z)}=\sigma$. Then  $\sigma$ is called the \textit{angular derivative} of $f$ at $\infty$. If, in addition, $\sigma\ne0,\infty$, then we say that $f$ is \textit{conformal} at $\infty$.

Now consider a conformal mapping $h:\D\to\mathbb{C}$ such that the image $\Omega=h(\D)$ is contained inside an upper half-plane. Suppose that $\angle\lim_{z\to\tau}h(z)=\infty$ for some $\tau\in\partial\D$. Take also $C:\D\to\mathbb{H}$ to be the Cayley transform with $C(z)=i\frac{\tau+z}{\tau-z}$. Evidently $C(\tau)=\infty$. We say that $h$ is conformal at $\tau$ if and only if $h\circ C^{-1}$ is conformal at $\infty$.

Note here that the conformality of $f$ at $\infty$ or of $h$ at $\tau$ is actually a geometric property of $\Omega=h(\D)$. Indeed, there exists a geometric characterization of conformality involving extremal length. However, for the purposes of the present work we will only need a looser condition, which is necessary, but not sufficient. Suppose that $f:\mathbb{H}\to\mathbb{C}$ with $f(\mathbb{H})\subset H_\rho$, $\rho\in\mathbb{R}$, is conformal at $\infty$. Let $\epsilon\in(0,\frac{\pi}{2})$. Then, there exists some $R>0$ depending on $\epsilon$ so that the angular domain $\{\zeta\in\mathbb{C}:|\zeta|>R, \;\Arg \zeta\in(\epsilon,\pi-\epsilon)\}+i\rho$ is contained in $f(\mathbb{H})$ (cf. \cite[p.94]{Jenkins}).

\section{Properties of Finite Shift}\label{sec:finite}

The main objective of this section is to comprehend the geometry of the Koenigs domain of a semigroup of finite shift. Recall that semigroups of finite shift are a priori non-elliptic, since the definition requires convergence to a boundary point. Moreover, given a semigroup $(\phi_t)$ of finite shift with Denjoy--Wolff point $\tau$, for every $z\in\D$ there exists a horodisk $E(\tau,R_z)$ such that $\gamma_z$ does not intersect this horodisk (or any smaller horodisk for that matter). On the other hand, every Stolz angle with vertex $\tau$ is eventually contained in any horodisk $E(\tau,R)$, $R>0$. As a result, all the orbits of a semigroup of finite shift converge to $\tau$ tangentially. However, by \cite[Lemma 17.4.3]{Booksem} we know that all the orbits of a hyperbolic semigroup converge non-tangentially to $\tau$. So, at once, hyperbolic semigroups are of infinite shift. On top of that, it is proved (see \cite[Proposition 17.7.3]{Booksem}) that parabolic semigroups of zero hyperbolic step are of infinite shift, as well. Therefore, every semigroup of finite shift is necessarily parabolic of positive hyperbolic step.

Before proceeding to results concerning only semigroups of finite shift, we will first provide some lemmas and propositions about the greater class of parabolic semigroups of positive hyperbolic step. In order not to be pedantic, we will always suppose that their Koenigs domain is contained inside some upper half-plane $H_\rho$. Of course, identical results hold in case the Koenigs domain is contained inside a lower half-plane.
\begin{lemma}\label{sectorlemma}
    Let $\Omega$ be a convex in the positive direction simply connected domain contained in some upper horizontal half-plane. Suppose that there exist $p\in\Omega$ and $\theta\in(0,\pi)$ such that $p+S_{\theta}\subset\Omega$. Then, for each $w\in\Omega$, there exists $q_w\in\Omega$ such that $q_w+S_\theta\subset\Omega$ and $w+t$ is eventually contained in $q_w+S_\theta$, as $t\to+\infty$.
\begin{proof}
    First of all, if the whole half-line $p+s$, $s\ge0$, is part of the boundary $\partial\Omega$, then the desired result it trivial with $q_w=p$, for all $w\in\Omega$. So, we may assume that this is not the case.
    
    We need to further distinguish cases depending on the relative position of $w$ and $p$ in order to obtain the result. First of all, if $\IM w>\IM p$, then the result is trivial with $q_w=p$.

    Secondly, assume that $\IM w=\IM p$. By the convexity in the positive direction of $\Omega$, the angular sector $p+S_\theta+x$ is contained inside $\Omega$, for all $x>0$. Pick such a $x>0$ to ensure that $\partial(p+S_\theta+x)\cap\partial\Omega=\emptyset$. Therefore, there exists some $\epsilon>0$, sufficiently small, such that $p+S_\theta+x-i\epsilon\subset\Omega$ (this can be deduced in combination with our initial remark in the proof). But this latter sector necessarily eventually contains $w+t$, $t\ge0$. So, the result holds for $q_w=p+x-i\epsilon$. 

    Finally, suppose that $\IM w<\IM p$. Note that since $\Omega$ is convex in the positive direction, we have that
    $$\sup\left\{\RE \zeta:\zeta\in\partial\Omega \text{ and }\IM \zeta\in\left(\IM w,\IM p\right)\right\}<+\infty.$$
    As a consequence, a similar argument as in the previous case, with moving the starting sector sufficiently to the right and then sufficiently downwards, leads to the desired result.
\end{proof}
\end{lemma}

Again, an identical lemma holds when considering sectors of the form $S_{\theta}^-$. However, we will only deal with upper horizontal angular sectors. Observe that the statement of the lemma dictates that $\theta\in(0,\pi)$. Indeed, when $\theta=\pi$, the result fails to hold. Take for example the upper half-plane $\mathbb{H}$ minus some half-line and some $w$ between the real line and the half-line. 


For all parabolic semigroups of positive hyperbolic step, the preceding lemma yields that either $\Omega$ does not contain any upper horizontal angular sector, or for each $z\in\D$, we may find an angular sector of angle $\theta\in (0,\pi)$ depending on $z$ such that it is contained inside $\Omega$, while also eventually containing the half-line $h(z)+t$, as $t\to+\infty$. The question that arises is whether for each $z$ we may find the largest, in terms of angle, such sector. We define the following:
$$\Theta_z:=\sup\{\theta\in(0,\pi]:\text{there exists }p\in\Omega \text{ such that }h(z)+t \text{ is eventually contained in }p+S_\theta\}.$$
If there exists no such sector, we write $\Theta_z:=0$.

\begin{proposition}\label{sectorprop}
    Let $(\phi_t)$ be a parabolic semigroup of positive hyperbolic step in $\D$. Then $\Theta_{z_1}=\Theta_{z_2}$, for all $z_1,z_2\in\D$.
\begin{proof}
    Fix two random distinct $z_1,z_2\in\D$. If $\Theta_{z_1}=0$, then Lemma \ref{sectorlemma} implies $\Theta_{z_2}=0$, as well. Next, we consider the case $\Theta_{z_1}>0$. Aiming for a contradiction, assume that $\Theta_{z_1}>\Theta_{z_2}$. Then, there exists some $\theta\in(\Theta_{z_2},\Theta_{z_1})$ and a point $p\in\Omega$ such that $p+S_\theta\subset\Omega$. Then, by Lemma \ref{sectorlemma} we may find $q\in\Omega$ such that $q+S_\theta\subset\Omega$ and the half-line $h(z_2)+t$ is eventually contained in $q+S_\theta$. Hence, $\theta\le\Theta_{z_2}$. Contradiction! In a reciprocal manner, we may prove that $\Theta_{z_1}<\Theta_{z_2}$ cannot hold either. Consequently, $\Theta_{z_1}=\Theta_{z_2}$. The arbitrariness in the choice of $z_1,z_2\in\D$ implies the desired result.
\end{proof}
\end{proposition}

Therefore, we understand that the supremum of the amplitudes of the horizontal angular sectors contained in $\Omega$ is actually independent of the starting point $z$ and depends on the semigroup itself. For the sake of brevity, we simply call this supremum the \textit{inner argument} of the semigroup and denote it by $\Theta$. 

\begin{remark}
    In this article, the notion of the inner argument of a semigroup $(\phi_t)$ is viewed solely under the scope of parabolic semigroups of positive hyperbolic step. Of course, it makes sense even if we assume that $\Omega$ is contained inside a lower horizontal half-plane. We just have to make a slight modification and take the infimum of negative arguments. Nevertheless, inner argument can be extended to any non-elliptic semigroup $(\phi_t)$. Trivially, every hyperbolic semigroup has inner argument $0$ since the Koenigs domain $\Omega$ fits inside a horizontal strip. On the contrary, for a parabolic semigroup of zero hyperbolic step, the definition of the inner argument may be modified to
    $\Theta=\sup\{\alpha+\beta:\text{for each }z\in\D\text{ there exists }p_z\in\Omega\text{ such that }h(z)+t\text{ is eventually contained in }p_z+S_{\alpha,\beta}\}$, where $S_{\alpha,\beta}=\{w\in\mathbb{C}:-\alpha<\arg w<\beta\}$, for some $0\le\alpha,\beta\le\pi$, $\alpha+\beta>0$. Notice that this more generalized definition of inner argument can be expressed for any non-elliptic semigroup and is equivalent to the one we use in the present work.
\end{remark}

Now, it seems natural to separate parabolic semigroups of positive hyperbolic step into those of finite shift and those of infinite shift, and see their difference with respect to the inner argument. We will first deal with the inner argument of semigroups of finite shift.

The importance of the finite shift is outlined through its connection with the classical angular derivative problem for the associated Koenigs function. Through this connection, we are able to measure the inner argument.

\begin{thm}\cite[Theorem 3]{BetsDescript}\label{thm:angular}
    Let $(\phi_t)$ be a parabolic semigroup with Denjoy--Wolff point $\tau$ and Koenigs function $h$. Then $(\phi_t)$ is of finite shift if and only if $h$ is conformal at $\tau$. 
\end{thm}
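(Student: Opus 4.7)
The plan is to lift the problem to the upper half-plane. Let $C:\mathbb{D}\to\mathbb{H}$ be the Cayley transform with $C(\tau)=\infty$, and set $f:=h\circ C^{-1}:\mathbb{H}\to\Omega$. By the definition in Subsection \ref{sub:conformality}, $h$ is conformal at $\tau$ precisely when $f$ is conformal at $\infty$, meaning $\sigma:=\angle\lim_{w\to\infty} w/f(w)$ exists and is finite nonzero; comparing imaginary parts along rays of $\mathbb{H}$ and using $f(\mathbb{H})\subset H_{\rho}$ forces $\sigma$ to be real positive. Since $C$ sends horodisks $E(\tau,R)$ to $H_{1/R}$, finite shift translates into the following condition on $f$: for every $w_0\in\mathbb{H}$, the orbit $w(t):=f^{-1}(f(w_0)+t)$ satisfies $\sup_{t\ge 0}\IM w(t)<\infty$; that is, preimages of rightward horizontal half-lines in $\Omega$ stay inside bounded horizontal strips of $\mathbb{H}$.

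For the direction $(\Leftarrow)$, assume $f$ is conformal at $\infty$ with angular derivative $\sigma>0$. By the weak geometric consequence recalled at the end of Subsection \ref{sub:conformality}, for any $\varepsilon\in(0,\pi/2)$ there exists $R_{\varepsilon}>0$ with $\{\zeta\in\mathbb{C}:|\zeta|>R_\varepsilon,\ \Arg\zeta\in(\varepsilon,\pi-\varepsilon)\}+i\rho\subset\Omega$. Consequently the horizontal half-line $\{f(w_0)+t:t\ge 0\}$ is eventually enclosed in a sector inside $\Omega$, and its preimage $w(t)$ lies in a non-tangential cone of $\mathbb{H}$ escaping to $\infty$. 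Applying $\angle\lim_{w\to\infty} w/f(w)=\sigma$ along $w(t)$ then gives $w(t)/(f(w_0)+t)\to\sigma$; comparing imaginary parts yields $\IM w(t)\to\sigma\IM f(w_0)$, which is bounded, hence finite shift.

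For the direction $(\Rightarrow)$, assume finite shift. Introduce the rescaled univalent maps $f_n(w):=f(nw)/n$ on $\mathbb{H}$, whose images are $\Omega/n$. By Theorem \ref{shiftkoenigs}, $\Omega$ contains a horizontal angular sector $p+S_\theta$ of some opening $\theta\in(0,\pi]$, so $(p+S_\theta)/n\subset\Omega/n$ prevents $\Omega/n$ from degenerating; simultaneously, finite shift squeezes preimages of fixed horizontal rays in $\Omega/n$ into horizontal strips of height $O(1/n)$, forcing $\Omega/n$ to fatten toward a horizontal half-plane. A normal families and Carath\'eodory kernel convergence argument then produces a subsequential locally uniform limit $f_n\to(w\mapsto w/\sigma)$ for some $\sigma\in(0,\infty)$, and uniqueness of this $\sigma$ across subsequences comes from the convexity of $\Omega$ in the positive direction together with the semigroup structure. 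This scaling limit is finally upgraded to the genuine angular limit $w/f(w)\to\sigma$ as $w\to\infty$ non-tangentially, giving conformality of $f$ at $\infty$.

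The main obstacle is the $(\Rightarrow)$ direction: ruling out that different subsequences of $f_n$ produce different or degenerate linear limits, and upgrading the scaling limit along $n\in\mathbb{N}$ to a true non-tangential limit. Finite shift rules out $\sigma=\infty$ and the sector $p+S_\theta$ rules out $\sigma=0$, but synchronizing subsequences and promoting to an angular limit requires controlling boundary behavior, likely via extremal length estimates or a careful analysis of the parabolic semigroup $\gamma_s(w):=f^{-1}(f(w)+s)$ induced on $\mathbb{H}$.
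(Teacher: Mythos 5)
This theorem is imported from \cite{BetsDescript}; the paper contains no proof of it, so your argument has to stand on its own, and as written both directions have genuine gaps.

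In $(\Leftarrow)$ the decisive step fails. You assert that $w(t)=f^{-1}(f(w_0)+t)$ lies in a non-tangential cone of $\mathbb{H}$ escaping to $\infty$ and then apply the angular limit $\angle\lim_{w\to\infty} w/f(w)=\sigma$ along $w(t)$. Conformality at $\infty$ controls images of Stolz angles of $\mathbb{H}$, not preimages of horizontal half-lines of $\Omega$ (which approach $\infty$ tangentially in $H_\rho$), so nothing places $w(t)$ in such a cone; worse, your own conclusion $\IM w(t)\to\sigma\IM f(w_0)<\infty$ together with $\RE w(t)\to+\infty$ means $w(t)$ eventually leaves every Stolz angle, so the argument contradicts its own hypothesis. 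And even where the angular limit applies, $w(t)/(f(w_0)+t)\to\sigma$ only gives $\IM w(t)=\sigma\IM f(w_0)+o(t)$, not boundedness. The step can be repaired with Julia's lemma (the half-plane Julia--Wolff--Carath\'eodory theorem): for $F=f-i\rho:\mathbb{H}\to\mathbb{H}$, conformality gives $\angle\lim_{w\to\infty}F(w)/w=1/\sigma\in(0,\infty)$, and this angular limit always equals $\inf_{w\in\mathbb{H}}\IM F(w)/\IM w$; hence $\IM f(w)-\rho\ge\IM w/\sigma$ for \emph{every} $w\in\mathbb{H}$, and evaluating at $w=w(t)$ yields $\IM w(t)\le\sigma\bigl(\IM f(w_0)-\rho\bigr)$ for all $t\ge0$, which is exactly finite shift.

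In $(\Rightarrow)$, the substantive half, you offer only a programme. None of the pivotal claims about $f_n(w)=f(nw)/n$ is established: the asserted $O(1/n)$ strips are not even uniform, since the finite-shift bound $R_z$ depends on the starting point and the relevant starting points $n\zeta_0$ move with $n$; it is not shown that the kernels of $\Omega/n$ converge to a half-plane rather than, say, a proper sector; ``convexity in the positive direction together with the semigroup structure'' is not an argument for uniqueness of subsequential limits; and no mechanism is given for upgrading a limit along the integers to a genuine angular limit at $\infty$. You flag these points yourself, but they are precisely the content of the theorem, so the forward implication remains unproved. The efficient route here is not rescaling but the classical boundary characterizations of conformality at $\infty$ via extremal length or strip conditions (Jenkins--Oikawa, Rodin--Warschawski; see \cite{Jenkins}), which convert the statement ``preimages of horizontal half-lines stay in bounded horizontal strips'' directly into the existence of a finite nonzero angular derivative.
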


\begin{proposition}\label{prop:InnerArgFinite}
    Let $(\phi_t)$ be a semigroup of finite shift. Then its inner argument $\Theta=\pi$. 
    \begin{proof}
    Let $h$ be the Koenigs function and $\Omega$ the Koenigs domain of the semigroup. Since $(\phi_t)$ is of finite shift, it is necessarily parabolic of positive hyperbolic step and thus, $\Omega\subset H_\rho$, for some $\rho<0$. In addition, by Theorem \ref{thm:angular}, $h$ is conformal at the Denjoy-Wolff point $\tau$ of the semigroup. By precomposing with the inverse of the Cayley transform $C:\D\to\mathbb{H}$ with $C(z)=i\frac{\tau+z}{\tau-z}$, we see that $h\circ C^{-1}$ is conformal at $\infty$. By Subsection \ref{sub:conformality}, this implies that for every $\epsilon\in(0,\frac{\pi}{2})$ there exists some $R>0$ such that the set $\{w\in\mathbb{C}:|w|>R, \;\Arg w\in(\epsilon,\pi-\epsilon)\}+i\rho$ is contained inside $\Omega$. But $\Omega$ is convex in the positive direction. Therefore, the larger set $\{w\in\mathbb{C}:|w|>R,\; \Arg w\in(0,\pi-\epsilon)\}+i\rho$ must be contained inside $\Omega$. So, let $\theta\in(\frac{\pi}{2},\pi)$ and set $\epsilon=\pi-\theta$. Then, we may find some $R>0$ such that 
    $$\{w\in\mathbb{C}:|w|>R,\; \Arg w\in(0,\theta)\}+i\rho\subset\Omega.$$
    Consequently, taking $p=i(R+\rho)$, we see that the horizontal angular sector $p+S_\theta$ is contained inside $\Omega$. But this procedure can be replicated for any angle $\theta\in(\frac{\pi}{2},\pi)$. Therefore, $\Theta\ge\theta$ for all $\theta\in(\frac{\pi}{2},\pi)$, which in turn leads to $\Theta=\pi$.
    \end{proof}
\end{proposition}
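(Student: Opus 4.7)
The plan is to combine Theorem \ref{thm:angular} with the geometric consequence of conformality at infinity recalled in Subsection \ref{sub:conformality}. Since $(\phi_t)$ is of finite shift, it is automatically parabolic of positive hyperbolic step, so $\Omega\subset H_\rho$ for some $\rho<0$, and moreover Theorem \ref{thm:angular} tells us that $h$ is conformal at the Denjoy--Wolff point $\tau$. Precomposing with the inverse of the Cayley transform $C:\D\to\mathbb{H}$ that sends $\tau$ to $\infty$ turns this into conformality of $h\circ C^{-1}:\mathbb{H}\to H_\rho$ at $\infty$, which is exactly the hypothesis of the geometric fact recalled at the end of Subsection \ref{sub:conformality}.

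That geometric fact provides, for each $\epsilon\in(0,\pi/2)$, a radius $R=R(\epsilon)>0$ such that the truncated sector
$$\{\zeta\in\mathbb{C}:|\zeta|>R,\ \Arg\zeta\in(\epsilon,\pi-\epsilon)\}+i\rho$$
is contained in $\Omega$. Given a target angle $\theta\in(\pi/2,\pi)$, I would set $\epsilon=\pi-\theta$ and then invoke the convexity in the positive direction of $\Omega$: since every horizontal ray emanating rightward from a point of $\Omega$ stays in $\Omega$, the ``missing wedge'' with $\Arg\zeta\in(0,\epsilon]$ can be filled in by sliding points from the truncated sector to the right, yielding the enlarged inclusion
$$\{\zeta\in\mathbb{C}:|\zeta|>R,\ \Arg\zeta\in(0,\pi-\epsilon)\}+i\rho\subset\Omega.$$

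Finally, I would choose a base point on the imaginary axis, for example $p=i(R+\rho)$, and check directly that $p+S_\theta\subset\Omega$: any $w\in S_\theta$ satisfies $\IM(w+iR)\ge R>0$, so that $|w+iR|>R$, and a short argument verifies $\Arg(w+iR)\in(0,\pi-\epsilon)$, placing $p+w$ inside the enlarged sector above. This produces $\Theta\ge\theta$ for every $\theta\in(\pi/2,\pi)$, and letting $\theta\to\pi^-$ forces $\Theta=\pi$.

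The main technical step is the bookkeeping in the last paragraph, where the base point $p$ must be chosen so that the translated sector $p+S_\theta$ sits inside the enlarged truncated sector; everything else is a transparent transfer between the disk and the half-plane and a direct appeal to the conformality-at-the-boundary consequence from Subsection \ref{sub:conformality}.
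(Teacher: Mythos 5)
Your proposal is correct and follows essentially the same route as the paper's proof: finite shift gives conformality of $h$ at $\tau$ via Theorem \ref{thm:angular}, the Cayley transform converts this to conformality at $\infty$, the geometric consequence from Subsection \ref{sub:conformality} yields the truncated sector, convexity in the positive direction enlarges it to aperture $\pi-\epsilon$, and the base point $p=i(R+\rho)$ is exactly the one the paper uses. The only difference is that you spell out the final containment $p+S_\theta\subset\Omega$ in slightly more detail, which the paper leaves implicit.
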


Nevertheless, this proposition does not provide a characterization for parabolic semigroups of positive hyperbolic step and finite shift. As we will see later on, those of infinite shift can also have inner argument equal to $\pi$. Moreover, the following theorem provides some useful implication concerning angular sectors.

\begin{thm}{\cite[Theorem 17.7.6]{Booksem}}\label{shiftkoenigs}
    Let $(\phi_t)$ be a parabolic semigroup in $\D$ of positive hyperbolic step, with Denjoy--Wolff point $\tau\in\partial\D$ and Koenigs domain $\Omega$.
    \begin{enumerate}
        \item[\textup{(i)}] If $\Omega$ does not contain any horizontal angular sector $p+S_\theta$ or $p+S_{\theta}^-$, $p\in\Omega$, $\theta\in(0,\pi]$, then $(\phi_t)$ is of infinite shift.
        \item[\textup{(ii)}] If $\Omega$ contains a horizontal half-plane, then $(\phi_t)$ is of finite shift.
    \end{enumerate}
\end{thm}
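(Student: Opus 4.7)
My plan is to translate everything to the Koenigs picture using the composite conformal map $\tilde H := h\circ C^{-1}:\mathbb H\to\Omega$, where $C(z)=i(\tau+z)/(\tau-z)$ is the Cayley transform with $C(\tau)=\infty$. Under $C$, the horodisk $E(\tau,R)$ pulls back to the upper sub-half-plane $\{w\in\mathbb H:\IM w > 1/(2R)\}$, and each orbit $\gamma_z$ corresponds to the curve $t\mapsto \tilde H^{-1}(h(z)+t)$ in $\mathbb H$. Hence the semigroup is of finite shift precisely when, for every $z\in\D$, the $\tilde H^{-1}$-image of the horizontal ray $h(z)+[0,\infty)$ lies in some horizontal strip $\{0<\IM w<K_z\}$ of $\mathbb H$. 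Both items therefore become statements about the boundary behavior of $\tilde H$ at the prime end $+\infty$ of $\Omega$.

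\textbf{Part (ii).} Assuming $\{\IM\zeta>c\}\subseteq\Omega$, translate vertically so that $c=0$ and $\mathbb H\subseteq\Omega$. The restriction $G:=\tilde H^{-1}|_{\mathbb H}:\mathbb H\to\mathbb H$ is then a univalent holomorphic self-map, and, since $\tilde H$ carries the prime end $\infty$ of $\mathbb H$ to the prime end $+\infty$ of $\Omega$, we have $G(w+t)\to\infty$ as $t\to\infty$. I would apply Julia's lemma to $G$ at the boundary point $\infty$: the quantity $\alpha:=\liminf_{w\to\infty}\IM w/\IM G(w)$ is positive, and it forces the horocycle-preservation bound $\IM G(w)\leq \alpha^{-1}\IM w$ throughout $\mathbb H$. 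Evaluating on the horizontal ray $w=h(z)+t$ with $\IM h(z)>0$, this gives $\IM G(h(z)+t)\leq \alpha^{-1}\IM h(z)$ uniformly in $t\geq 0$, which is exactly the strip trapping required. An arbitrary starting point $z\in\D$ is handled via $\phi_t=\phi_{t-t_0}\circ\phi_{t_0}$ for $t_0$ large enough that $\IM h(\phi_{t_0}(z))>0$.

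\textbf{Part (i), contrapositive.} Assume $(\phi_t)$ is of finite shift and produce a horizontal angular sector inside $\Omega$. Fix $z\in\D$ and $K>0$ such that $\tilde H^{-1}(h(z)+[0,\infty))\subseteq\{0<\IM w<K\}$. Then the subdomain $U_K:=\tilde H(\{\IM w>K\})\subseteq\Omega$ accumulates only at the prime end $+\infty$ and lies on one side of the orbit ray. My plan is to invoke the geometric description of conformality at $\infty$ recorded in Subsection~\ref{sub:conformality}: the strip trapping should force, via a modulus (extremal-length) comparison for families of vertical crosscuts in $\mathbb H$ and their $\tilde H$-images in $\Omega$, that $\tilde H$ is conformal at $\infty$ in the Jenkins sense. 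The geometric consequence quoted in Subsection~\ref{sub:conformality} then exhibits, for any $\epsilon\in(0,\pi/2)$, a sector $\{|\zeta|>R,\ \Arg\zeta\in(\epsilon,\pi-\epsilon)\}+i\rho\subseteq\Omega$ for some $R>0$ and $\rho\in\mathbb R$, which is a translate of the horizontal angular sector $p+S_{\pi-2\epsilon}$ forbidden by the hypothesis.

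\textbf{Main obstacle.} The crux is the passage in Part (i) from trapping of one orbit inside a horizontal strip of $\mathbb H$ to the actual conformality of $\tilde H$ at the prime end $+\infty$, as opposed to the mere necessary condition stated in Subsection~\ref{sub:conformality}. The modulus comparison must be carried out carefully: one identifies horizontal rectangles in the strip whose conformal moduli can be compared to those of their $\tilde H$-images, and shows that the comparison forces a finite, strictly positive angular derivative. A companion application of Koebe distortion then promotes this analytic information into the concrete sector inclusion. For Part (ii) the subtlety is lighter but still present: ensuring $\alpha>0$ in Julia's lemma makes essential use of the univalence of $G$ together with the Koenigs structure of $\tilde H$, rather than treating $G$ as an arbitrary holomorphic self-map of $\mathbb H$.
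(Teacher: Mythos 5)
This statement is quoted from \cite[Theorem 17.7.6]{Booksem} and is not proved in the paper, so your proposal can only be judged on its own merits. The strategy (pass to the half-plane model and control $\IM \tilde H^{-1}$ along the orbit ray) is the right one, but Part (ii) contains a genuine error: Julia's lemma, applied to a holomorphic self-map $G$ of $\mathbb{H}$ at the boundary fixed point $\infty$, asserts that horodisks at $\infty$ (the sets $\{\IM w>s\}$) are mapped \emph{into} horodisks at $\infty$, i.e.\ it yields a \emph{lower} bound $\IM G(w)\ge \beta\,\IM w$ with $\beta=\liminf_{w\to\infty}\IM G(w)/\IM w$. It never yields the upper bound $\IM G(w)\le \alpha^{-1}\IM w$ that you assert and that your strip-trapping requires; as a general principle that inequality is simply false (consider $G(w)=w+\sqrt{w}$, for which your $\alpha=1$ but $\IM G(w)>\IM w$ everywhere). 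The correct manoeuvre is dual to yours: normalize so that $\Omega\subseteq\mathbb{H}$ (the \emph{containing} half-plane, available since the semigroup is parabolic of positive hyperbolic step), apply Julia's lemma to the forward map $\tilde H:\mathbb{H}\to\mathbb{H}$ to get $\IM \tilde H(w)\ge\lambda\,\IM w$, and read this at $w=w(t):=\tilde H^{-1}(h(z)+t)$ to obtain $\IM w(t)\le \lambda^{-1}\IM h(z)$. The hypothesis that $\Omega$ contains a half-plane is then needed precisely to show $\lambda>0$ (equivalently, that $\tilde H$ has finite positive angular derivative at $\infty$); your sketch does not address this because the positivity claim is attached to the wrong map and the wrong inequality.

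Part (i) is closer to workable, but as written it reduces the whole statement to ``finite shift $\Rightarrow$ $\tilde H$ conformal at $\infty$,'' which is exactly Theorem~\ref{thm:angular} (\cite[Theorem 3]{BetsDescript}); you propose to re-derive it by an extremal-length/modulus comparison but explicitly defer that computation, so the crux is named rather than proved. If you instead invoke Theorem~\ref{thm:angular} as a black box, the remainder is fine, except for one omitted step: the Jenkins consequence of conformality gives the truncated \emph{symmetric} sector $\{|\zeta|>R,\ \Arg\zeta\in(\epsilon,\pi-\epsilon)\}+i\rho$, which is not of the form $p+S_{\pi-2\epsilon}$ (its lower edge is not horizontal); one must first enlarge it using the convexity of $\Omega$ in the positive direction to $\{|\zeta|>R,\ \Arg\zeta\in(0,\pi-\epsilon)\}+i\rho$, exactly as is done in the proof of Proposition~\ref{prop:InnerArgFinite}, before extracting a horizontal angular sector contradicting the hypothesis.
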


We see that Proposition \ref{prop:InnerArgFinite} essentially strengthens implication (i) of the last theorem. In addition, implication (ii) provides an actual demonstration of Proposition \ref{prop:InnerArgFinite}, since if $\Omega$ contains a half-plane, then clearly $(\phi_t)$ has inner argument $\pi$. However, it would be interesting to know if this implication is actually an equivalence. In other words, is there actually a semigroup $(\phi_t)$ of finite shift whose Koenigs domain does not contain a half-plane? The answer to this question is affirmative and we will see this through the next example which is inspired through \cite{lefteris}, where a similar, but essentially different, construction takes place.

\begin{example}
    For our counterexample, we want a semigroup $(\phi_t)$ of finite shift whose Koenigs domain $\Omega$ does not contain a horizontal half-plane. For the sake of simplicity, we use the upper half-plane $\mathbb{H}$ and its horodisks as our main setting instead of the unit disk. Consider the mapping $h:\mathbb{H}\to\mathbb{C}$ with
    $$h(z)=\sum\limits_{n=1}^{+\infty}\frac{1}{n}\log\left(\frac{n-z}{n-i}\right)-z.$$
    Clearly, we first need to check whether this mapping is a well-defined function of $\mathbb{H}$. Let $K$ be a compact subset of $\mathbb{C}\setminus\{n : \, n\ge 1\}$. Due to compactness there exists some $z_0\in K$ and some $n_0=n_0(K)\in\mathbb{N}$ such that 
    $$\left|\frac{z-i}{n-i}\right|\le\left|\frac{z_0-i}{n-i}\right|=:\rho(n)<1, \quad\text{for all }z\in K\text{ and all }n\ge n_0.$$
    Then, through the Taylor expansions of $\log\left(\frac{n-z}{n-i}\right)$ about $i$ and of $\log(1-x)$ about $0$, we get
    $$\left|\log\left(\frac{n-z}{n-i}\right)\right|=\left|\sum\limits_{k=1}^{+\infty}\frac{1}{k}\left(\frac{z-i}{n-i}\right)^k\right|\le\sum\limits_{k=1}^{+\infty}\frac{1}{k}\left|\frac{z_0-i}{n-i}\right|^k=-\log(1-\rho(n)),$$
    for all $z\in K$ and all $n\ge n_0$. However one can directly see that $-n\log(1-\rho(n))\to|z_0-i|$. As a consequence, there exists a constant $M>0$ depending only on $K$ such that $|-n\log(1-\rho(n))|<M$, for all $n\ge n_0$. Hence
    $$\left|\sum\limits_{n=n_0}^{+\infty}\frac{1}{n}\log\left(\frac{n-z}{n-1}\right)\right|\le\sum\limits_{n=n_0}^{+\infty}\left|\frac{-n\log(1-\rho(n))}{n^2}\right|\le M\sum\limits_{n=n_0}^{+\infty}\frac{1}{n^2} < +\infty .$$
 As a result, the series is locally convergent in $\mathbb{C}\setminus\{n : n\ge1\}$ and thus $h$ is well-defined and analytic in $\mathbb{H}$, while also being continuous on $\mathbb{R}\setminus\{n : n\ge1\}$.

   \begin{figure}
        \centering
        \includegraphics[scale=0.56]{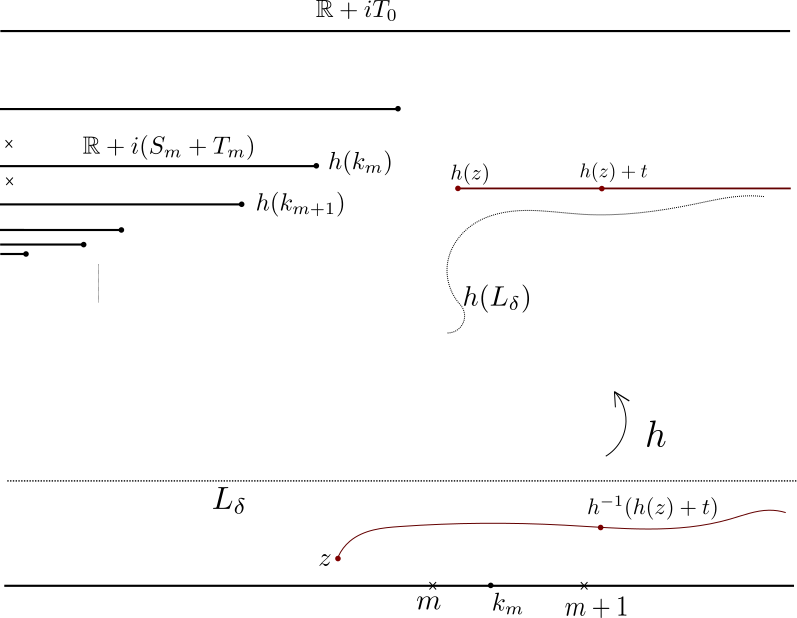}
        \caption{The sets in Example 3.2}
        \label{fig:finiteexample}
    \end{figure}

    Next, we aim to identify the geometry of $h(\mathbb{H})$. We do this by inspecting the boundary behavior of $h$. Differentiating term by term, we see that $h^{\prime}(z)=\sum_{n=1}^{+\infty}\frac{1}{n}\frac{1}{z-n}-1$ and $h''(z)=-\sum_{n=1}^{+\infty}\frac{1}{n}\frac{1}{(z-n)^2}$. So, for $x\in\mathbb{R}\setminus\{n : n\ge1\}$, we have $h^{\prime \prime} (x)<0$. This signifies that $h'$ is strictly decreasing in every interval of the form $(m,m+1)$, $m\in\mathbb{N}$, and in the interval $(-\infty,1)$. However $\lim_{x\to m^+}h'(x)=+\infty$ and $\lim_{x\to m^-}h'(x)=-\infty$, for every $m\in\mathbb{N}$. Therefore, for each $m\in\mathbb{N}$ there exists some unique $k_m\in(m,m+1)$ so that $h'(k_m)=0$. Combining, we see that for each $m\in\mathbb{N}$, $h'(x)>0$ for $x \in (m,k_m)$, $h'(x)<0$ for $x \in (k_m,m+1)$ and also $h'(x)<0$ for $x \in (-\infty,1)$. Considering that $h'(x)=\RE h'(x)$ and $h''(x)=\RE h''(x)$, but $h(x)$ might have non zero imaginary part, all the above information may be translated into information about the monotonicity of $\RE h(x)$. As a matter of fact, $\RE h(x)$ is decreasing in $(-\infty,1)$, increasing in every interval $(m,k_m)$ and decreasing in every interval $(k_m,m+1)$. To end the study of the real part of $h$ observe that $\RE h(m)=-\infty$ for all $m\in\mathbb{N}$ and $\lim_{x\to-\infty}h(x)=+\infty$.

    Moving on to the imaginary part, we see that for $x\in(m,m+1)$
    $$\IM h(x)=\sum\limits_{n=1}^{+\infty}\frac{1}{n}\Arg\left(\frac{n-x}{n-i}\right)=\sum\limits_{n=1}^{+\infty}\frac{1}{n}\Arg(n+i)-\sum\limits_{n=1}^{m}\frac{\pi}{n},$$
    with $y:=\sum_{n=1}^{+\infty}\frac{1}{n}\Arg(n+i)=\sum_{n=1}^{+\infty}\frac{1}{n}\arctan\frac{1}{n}<+\infty$. So in each interval $(m,m+1)$, $m\in\mathbb{N}$, the mapping $h$ maintains constant imaginary part. Clearly, for $x\in(-\infty,1)$ we see that $\IM h(x)=y$. Combining everything together, we understand that
    $$\Omega:=h(\mathbb{H})=\left\{w\in\mathbb{C}:\IM w<y\right\}\setminus\bigcup\limits_{m=1}^{+\infty}\left\{w\in\mathbb{C}:\IM w=y-\sum\limits_{n=1}^{m}\frac{\pi}{n}, \quad\RE w\le\RE h(k_m)\right\}.$$
    Certainly $\Omega$ is a convex in the positive direction domain which is contained inside a horizontal half-plane, but not a horizontal strip. Therefore, it is the Koenigs domain of a parabolic semigroup $(\phi_t)$ of positive hyperbolic step. In addition, due to the fact that the harmonic series diverges, $\Omega$ does not contain any horizontal half-plane. All that remains is to show  that the associated $(\phi_t)$ is of finite shift.

    Consider $\tau$ to be the Denjoy--Wolff point of $(\phi_t)$ and fix $z\in\D$. Through a Cayley transform $C:\D\to\mathbb{H}$ that maps $\tau$ to $\infty$, we map $z$ to $C(z)$. It is clear that $h\circ C$ can be selected to be the Koenigs function of $(\phi_t)$. So, every horizontal half-line contained inside $\Omega$ and stretching to the right is the image of some orbit of $(\phi_t)$. Recall that the horodisks of $\D$ tangent at $\tau$ are mapped through $C$ to horizontal half-planes. For $\delta>y-\IM h(C(z))>0$ consider the line $L_{\delta}:=\{x+i\delta: x\in\mathbb{R}\}$ which is the boundary of some horodisk $E_\delta$ of $\mathbb{H}$. Then
    $$\IM h(x+i\delta)=\sum\limits_{n=1}^{+\infty}\frac{1}{n}\Arg\left(\frac{n-x-i\delta}{n-i}\right)-\delta\le\sum\limits_{n=1}^{+\infty}\frac{1}{n}\arctan\left(\frac{1}{n}\right)-\delta=y-\delta<\IM h(C(z)),$$
    for all $x\in\mathbb{R}$. As a result, the half-line $\{h(C(z))+t:t\ge0\}$ does not intersect $h(E_\delta)$. This implies that the image through $h^{-1}$ of this half-line does not intersect $E_\delta$, which in turn implies that $\gamma_z$ does not intersect the horodisk $E(\tau,\frac{1}{\delta})$. In conclusion, $(\phi_t)$ is of finite shift and serves as the desired counterexample.
\end{example}

Next, we turn to parabolic semigroups of positive hyperbolic step and infinite shift. Even though in the case of finite shift the situation is prescribed, for semigroups of infinite shift the value of the inner argument may vary. Indeed, for each $\Theta \in [0,\pi]$, we may construct a semigroup of this type with inner argument $\Theta$. We will verify this through a series of examples. 

\begin{example}
    Let $\Omega=\{w=x+iy\in\mathbb{C}:x>0,\; 0<y<\sqrt{x}\}-1-i$. Then $\Omega$ is a convex in the positive direction simply connected domain with $0\in\Omega$. Therefore, $\Omega$ is the Koenigs domain of some non-elliptic semigroup $(\phi_t)$. Obviously $\Omega$ is contained inside a horizontal half-plane, but not inside a horizontal strip, so $(\phi_t)$ must be parabolic of positive hyperbolic step. It is easy to check that $\Omega$ contains no upper horizontal angular sector, so $\Theta=0$ and $(\phi_t)$ is necessarily of infinite shift. 
\end{example}

\begin{example}
    Let $\theta\in(0,\pi)$ and consider the upper horizontal angular sector $\Omega=\{w\in\mathbb{C}:\Arg w\in(0,\theta)\}-e^{i\frac{\pi}{4}}$. Following a similar reasoning as above, $\Omega$ corresponds to a parabolic semigroup $(\phi_t)$ of positive hyperbolic step. Evidently $\Theta=\theta\in(0,\pi)$ and $(\phi_t)$ is necessarily of infinite shift.
\end{example}

\begin{example}\cite[Example 1.2]{cordella}
    Let $\Omega=\{w=x+iy\in\mathbb{C}:y>0,\;x>-y|\log y|\}-2i$. In \cite{cordella}, the author shows that this domain corresponds to a parabolic semigroup $(\phi_t)$ of positive hyperbolic step and infinite shift. It can be readily checked that $\Omega$ contains a translation of an upper horizontal angular sector $S_\theta$, for every $\theta\in(0,\pi)$. As a consequence, $\Theta=\pi$.
\end{example}

Before concluding the section, one last useful observation stemming from the definition is the translation of finite shift in the setting of a half-plane. We use the upper half-plane $\mathbb{H}$ as a benchmark. Let $\tau\in\partial\D$ and consider the horodisk $E(\tau,R)$, for some $R>0$. Then, the Cayley transform $C:\D\to \mathbb{H}$, with $C(z)=i\frac{\tau+z}{\tau-z}$ mapping $\tau$ to $\infty$ sends every horodisk of $\D$ to a half-plane parallel to $\mathbb{H}$. To be more exact, it is easily computed that $C(E(\tau,R))=H_{\frac{1}{R}}=\{\zeta\in\mathbb{C}:\IM \zeta>\frac{1}{R}\}$. Therefore, combining this remark with our knowledge of finite shift semigroups, we see that a semigroup $(\phi_t)$ is of finite shift if and only if for every $z\in\D$ there exists some $R_z>0$ so that $\IM C(\phi_t(z))=\IM C(\gamma_z(t))<\frac{1}{R_z}$, for all $t\ge0$.

\section{Rates of Convergence}\label{sec:rates}
Having made all the preparatory work, we are ready to proceed to the main body of the article. In the current section, we prove our main theorems concerning the rates by which the orbits of a semigroup converge to the Denjoy--Wolff point in terms of Euclidean and hyperbolic geometries.


\begin{proof}[\bf Proof of Theorem \ref{theo-hyper}]
 (i) Let $h$ be the Koenigs function of $(\phi_t)$ and $\Omega$ the respective Koenigs domain. Fix $z\in\D$ and let $\epsilon>0$. Consider $\theta=\frac{\Theta\pi}{\pi+2\epsilon\Theta}$. It can be readily checked that $\theta\in(0,\Theta)$. By the conformal invariance of the hyperbolic distance, we have $d_{\D}(z,\phi_t(z))=d_{\Omega}(h(z),h(\phi_t(z)))=d_{\Omega}(h(z),h(z)+t)$. First of all, since $\theta$ is smaller than the inner argument of $(\phi_t)$ we may find a point $p_\theta\in\Omega$ and $t_0>0$ ($p_\theta$ and $t_0$ depend on $z$ and $\epsilon$) such that $h(z)+t$ is contained in $p_\theta+S_{\theta}\subset\Omega$, for $t\ge t_0$.
 
Moreover, the semigroup must be also parabolic of positive hyperbolic step, so there exists $\rho<0$ so that
    $\Omega\subset H_{\rho}$. Combining everything and through the triangle inequality and the domain monotonicity property of the hyperbolic distance, we get
    \begin{equation}\label{monotonicity}
        d_{H_\rho}(h(z),h(z)+t)\le d_{\Omega}(h(z),h(z)+t)\le d_\Omega(h(z),h(z)+t_0)+d_{p_\theta+S_{\theta}}(h(z)+t_0,h(z)+t),
    \end{equation}
    for all $t\ge t_0$. For a fixed $\epsilon>0$, the hyperbolic distance $d_\Omega(h(z),h(z)+t_0)$ is bounded and can just be treated as an additive constant depending on both $z$ and $\epsilon$. So, we may assume beforehand that $t_0=0$. We treat each side of (\ref{monotonicity}) separately.

    We start with the easier left-hand side. We need to evaluate the hyperbolic distance in the upper half-plane $H_{\rho}$. Then, by (\ref{half-plane})
    \begin{eqnarray*}
        d_{H_\rho}(h(z),h(z)+t)&=&\frac{1}{2}\log\frac{|2i\IM h(z)-t-2\rho i|+t}{|2i\IM h(z)-t-2\rho i|-t}\\
        &=&\frac{1}{2}\log\frac{t^2+4(\IM h(z)-\rho)^2+t^2+2t\sqrt{t^2+4(\IM h(z)-\rho)^2}}{4(\IM h(z)-\rho)^2}\\
        &\ge&\frac{1}{2}\log\frac{4t^2}{4(\IM h(z)-\rho)^2}\\
        &=&\log t-\log(\IM h(z)-\rho).
    \end{eqnarray*}
    So, going back to the unit disk, we find $d_{\D}(z,\phi_t(z))\ge\log t-\log(\IM h(z)-\rho)$ and this inequality is in fact true for all $t\ge0$.

    Next, we must work with the right-hand side of (\ref{monotonicity}), which is carried out through estimations with the hyperbolic distance inside the horizontal angular sector $p_\theta+S_{\theta}$. Set $w=h(z)-p_\theta$, $w_1(t)=|w+t|^2$, $w_2(t)= \Arg(w+t)$, $d_1=\RE(w^{\pi / \theta})$, $d_2=\IM(w^{\pi / \theta})$, 
      $$A_1(t): =
      \left[\left(w_1(t)^{\pi/ 2\theta} \cos\left(\frac{\pi}{\theta} w_2(t) \right) -d_1 \right)^2 + \left(w_1(t)^{\pi/ 2\theta} \sin\left(\frac{\pi}{\theta} w_2(t) \right) -d_2 \right)^2 \right]^{1/2}$$ and $$A_2(t):= 
      \left[\left(w_1(t)^{\pi/ 2\theta} \cos\left(\frac{\pi}{\theta} w_2(t) \right) -d_1 \right)^2 + \left(w_1(t)^{\pi/ 2\theta} \sin\left(\frac{\pi}{\theta} w_2(t) \right) +d_2 \right)^2 \right]^{1/2}. $$
        Due to conformal invariance, the domain monotonicity property of the hyperbolic distance and \eqref{sector}, it follows that 
        \begin{eqnarray*}
            d_{\D}(z, \phi_t(z)) &\leq&d_{p_\theta+S_\theta}(h(z),h(z)+t) \\
            &=&d_{S_\theta}(w,w+t)\\
            &=&\frac{1}{2}   \log \frac{A_2(t)+A_1(t)}{A_2(t) -A_1(t)} \\
            &=&\frac{1}{2}      \log \frac{(A_1(t)+A_2(t))^2}{A_2(t)^2 - A_1(t)^2} \\
           &= &\frac{1}{2}   \log\left[ \frac{w_1(t)^{\pi/\theta}}{4d_2 w_1(t)^{\pi/ 2\theta}  \sin \left(\frac{\pi}{\theta} w_2(t) \right) } \left( \frac{A_1(t)}{w_1(t)^{\pi/2\theta}} + \frac{A_2(t)}{w_1(t)^{\pi/2\theta}} \right)^2\right] ,
           \end{eqnarray*}
       for all $t\ge 1$. Next, we may find some $t_1\ge 1$ so that 
       \begin{equation}\label{eq:w1}
       1\le w_1(t) \le 2t^2,
       \end{equation}
       for all $t\ge t_1$. Through this inequality we can see that      
        \begin{eqnarray*}
            A_i(t) w_1(t)^{-\pi/2\theta} &\leq& \left[\left(\left|\cos\left(\frac{\pi}{\theta} w_2(t) \right)\right| + \frac{|d_1|}{w_1(t)^{\pi/2\theta}} \right)^2 + \left( \left|\sin\left(\frac{\pi}{\theta} w_2(t) \right) \right| +\frac{|d_2|}{w_1(t)^{\pi/ 2\theta} } \right)^2 \right]^{1/2} \\
            &\le & 2+\frac{1}{w_1(t)^{\pi / 2\theta}}(|d_1|+|d_2|)\\
            &\underset{\eqref{eq:w1}}{\leq}& 2 +2|w|^{\pi / \theta}=:\alpha
        \end{eqnarray*}
for $i=1,2$, for all $t\ge t_1$, where $\alpha =\alpha(z,\epsilon)$.
        As a result, combining everything we get
    \begin{eqnarray}
        \notag d_{\D}(z,\phi_t(z))&\le&\frac{1}{2}\log w_1(t)^{\pi / 2\theta}-\frac{1}{2}\log\sin\left(\frac{\pi}{\theta}w_2(t)\right)+\frac{1}{2}\log \alpha^2 \\
        \label{eq:upperest} &\underset{\eqref{eq:w1}}{\leq}& \frac{1}{2}\log t^{\pi / \theta}-\frac{1}{2}\log\sin\left(\frac{\pi}{\theta}w_2(t)\right)+\frac{1}{2}\log\left(a^22^{\pi/ 2\theta}\right),
    \end{eqnarray}
    for all $t\ge t_1$. By the shape of the domains, we have that $\IM w>0$ and therefore $\lim_{t\to+\infty}w_2(t)=0$ with the convergence happening through positive values. Therefore, there exists some $t_2\ge t_1$ such that $\sin(\frac{\pi}{\theta}w_2(t))\ge\frac{\pi}{2\theta}w_2(t)=\frac{\pi}{2\theta}\arctan\frac{\IM w}{\RE w+t}$, for all $t\ge t_2$. In a similar fashion, we see $\lim_{t\to+\infty}\arctan\frac{\IM w}{\RE w+t}=0$ and hence we may find $t_3\ge \max\{t_2,|\RE w|\}$ so that $\arctan\frac{\IM w}{\RE w+t}\ge\frac{\IM w}{2(\RE w+t)}$, for all $t\ge t_3$. As a consequence,
    \begin{eqnarray*}
        -\frac{1}{2}\log\sin\left(\frac{\pi}{\theta}w_2(t)\right)&\le&-\frac{1}{2}\log\left(\frac{\pi}{4\theta}\frac{\IM w}{\RE w+t}\right)\\
        &=&\frac{1}{2}\log(\RE w+t)-\frac{1}{2}\log\left(\frac{\pi\IM w}{4\theta}\right)\\
        &\le&\frac{1}{2}\log t-\frac{1}{2}\log\left(\frac{\pi\IM w}{2\theta}\right),
    \end{eqnarray*}
    for all $t\ge t_3$.
    Returning back to \eqref{eq:upperest}, we obtain that there exists a constant $c_0=c_0(z,\theta)$ such that 
    $$ d_{\D}(z, \phi_t(z)) \leq \frac{\pi +\theta }{2\theta} \log t+c_0=\left(\frac{\pi+\Theta}{2\Theta}+\epsilon\right)\log t+c_0 \, , \quad t\geq t_3.$$  
    Finally, the hyperbolic length $l_{\D}(\gamma_z;[1,t_3])$ is obviously finite, so we may trivially write
    $$d_{\D}(z,\phi_t(z))\le\left(\frac{\pi+\Theta}{2\Theta}+\epsilon\right)\log t+ l_{\D}(\gamma_z;[1,t_3]),$$
    for all $t\in(1,t_3)$. Setting $c_2=c_2(z,\epsilon)=\max\{c_0,l_{\D}(\gamma_z;[1,t_3])\}$, we deduce the desired result.

(ii) Let us now handle the case of finite shift. The lower bound is deduced exactly as in the previous case. We only need to deal with the upper bound. Fix $z\in\D$. Through the Cayley transform $C:\D\to\mathbb{H}$ with $C(z)=i\frac{\tau+z}{\tau-z}$, we map the orbit $\gamma_z$ to an orbit $\gamma:[0,+\infty)\to\mathbb{H}$ in the upper half-plane, where $\gamma(t):=C(\gamma_z(t))=C(\phi_t(z))$. By the conformal invariance of the hyperbolic distance
$$d_{\D}(z,\phi_t(z))=d_\mathbb{H}(C(z),C(\phi_t(z)))=d_\mathbb{H}(\gamma(0),\gamma(t)).$$
For the sake of brevity, set $\gamma(t)=x_t+iy_t$. By Julia's Lemma, we know that $y_t$ is strictly increasing for $t\ge0$. Moreover, since $(\phi_t)$ is of finite shift, we already established that $\lim_{t\to+\infty}y_t=L\in(0,+\infty)$. So $y_t<L$, for all $t\ge0$. Furthermore, the orbit $\gamma_z$ converges to $\tau$ tangentially. Translating this piece of information in the setting of the upper half-plane, it can be easily checked that $\lim_{t\to+\infty}|x_t|=+\infty$. But the convergence to $\tau$ can be either only by angle $0$, or only by angle $\pi$. Again, passing to $\mathbb{H}$, this means that either $\lim_{t\to+\infty}x_t=-\infty$ or $\lim_{t\to+\infty}x_t=+\infty$. Without loss of generality, we may assume the latter holds. Therefore, there exists some $t_0\ge0$ such that $x_t>x_0$, for all $t\ge t_0$. Utilizing the triangle inequality, we may write
$$d_{\D}(z,\phi_t(z))=d_\mathbb{H}(x_0+iy_0,x_t+iy_t)\le d_\mathbb{H}(x_0+iy_0,x_t+iy_0)+d_\mathbb{H}(x_t+iy_0,x_t+iy_t).$$
Through formula (\ref{half-plane}), it is straightforward that $d_\mathbb{H}(x_t+iy_0,x_t+iy_t)\le\frac{1}{2}\log\frac{L}{y_0}=:c_0$, for all $t\ge0$. So, as of yet,
\begin{equation}\label{first bound}
    d_{\D}(z,\phi_t(z))\le d_\mathbb{H}(x_0+iy_0,x_t+iy_0)+c_0, \quad\quad t\ge0.
\end{equation}
Now we have to evaluate the hyperbolic distance in $\mathbb{H}$ of a horizontal rectilinear segment which is seemingly easier to work with. Restricting ourselves to $t\ge t_0$, we have $x_t-x_0>0$. Again using formula (\ref{half-plane}) and executing similar computation as in the previous case of the theorem, we may prove that
$$d_\mathbb{H}(x_0+iy_0,x_t+iy_0)=\frac{1}{2}\log\frac{2y_0^2+(x_t-x_0)^2+(x_t-x_0)\sqrt{4y_0^2+(x_t-x_0)^2}}{2y_0^2},$$
for all $t\ge t_0$. Since $x_t\to+\infty$, there exists some $t_1\ge t_0$ such that $2y_0^2\le (x_t-x_0)^2$, for all $t\ge t_1$. As a result,
\begin{eqnarray}\label{second bound}
\notag    d_\mathbb{H}(x_0+iy_0,x_t+iy_0)&\le&\frac{1}{2}\log\frac{2(x_t-x_0)^2+(x_t-x_0)\sqrt{3(x_t-x_0)^2}}{2y_0^2}\\
 \notag   &=&\frac{1}{2}\log\frac{(2+\sqrt{3})(x_t-x_0)^2}{2y_0^2}\\
\notag    &=&\log(x_t-x_0)+\frac{1}{2}\log\frac{2+\sqrt{3}}{2y_0^2}\\
    &\le&\log|\gamma(t)-\gamma(0)|+\frac{1}{2}\log\frac{2+\sqrt{3}}{2y_0^2},
\end{eqnarray}
for all $t\ge t_1$. Returning to (\ref{first bound}), we get $d_{\D}(z,\phi_t(z))\le\log|\gamma(t)-\gamma(0)|+C_0$, where $C_0=c_0+\frac{1}{2}\log\frac{2+\sqrt{3}}{2y_0^2}$. We are left with estimating the modulus $|\gamma(t)-\gamma(0)|$. We will achieve this by proving that $\gamma$ is actually Lipschitz. Through quick calculation, we see that $\frac{\partial\gamma(t)}{\partial t}=\frac{2\tau\frac{\partial\phi_t(z)}{\partial t}}{(\tau-\phi_t(z))^2}=\frac{2\tau G(\phi_t(z))}{(\tau-\phi_t(z))^2}$, where $G$ is the associated infinitesimal generator. Denoting by $h$ the Koenigs function of $(\phi_t)$, we know that $G(\phi_t(z))=\frac{1}{h'(\phi_t(z))}$ and therefore 
$$\left|\frac{\partial\gamma(t)}{\partial t}\right|=\frac{2}{|h'(\phi_t(z))|\cdot|\tau-\phi_t(z)|^2}.$$
However, by a well-known result concerning conformal mapping (see e.g. \cite[Corollary 1.4]{pommerenke4}), we have $|h'(\phi_t(z))|\ge\frac{\text{dist}(h(\phi_t(z)),\partial\Omega)}{1-|\phi_t(z)|^2}$, where $\Omega=h(\D)$ is the Koenigs domain of $(\phi_t)$. Combining, we get
$$\left|\frac{\partial\gamma(t)}{\partial t}\right|\le\frac{2(1-|\phi_t(z)|^2)}{\text{dist}(h(z)+t,\partial\Omega)\cdot|\tau-\phi_t(z)|^2}.$$
However, $(\phi_t)$ is of finite shift. Consequently, by definition, there exists some $R_z>0$ such that $\frac{1-|\phi_t(z)|^2}{|\tau-\phi_t(z)|^2}\le R_z$, for all $t\ge0$. In addition, $\Omega$ is convex in the positive direction. Thus, trivially, $\text{dist}(h(z)+t,\partial\Omega)\ge\text{dist}(h(z),\partial\Omega)=:d$, for all $t\ge0$. So, for all $t\ge0$
$$\left|\frac{\partial\gamma(t)}{\partial t}\right|\le\frac{2R_z}{d}=:M.$$
Therefore, $\gamma$ is Lipschitz and $|\gamma(t)-\gamma(0)|\le M(t-0)$, for all $t\ge0$. Going back, we are led to $d_{\D}(z,\phi_t(z))\le\log t+\log M+C_0$, for all $t\ge t_1$. Arguing as in the previous case, there exists some constant $c_2=c_2(z)$ so that the result holds for all $t>1$.

\end{proof}

\begin{corollary}\label{cor:limitshyp}
    Let $(\phi_t)$ be a semigroup in $\D$ of positive hyperbolic step with inner argument $\Theta \in (0,\pi]$. The following are true:
    \begin{itemize}
    \item[(i)]If $(\phi_t)$ is of infinite shift, then for all $z\in\D$
    $$1\le\liminf\limits_{t\to+\infty}\frac{d_{\D}(z,\phi_t(z))}{\log t}\le\limsup\limits_{t\to+\infty}\frac{d_{\D}(z,\phi_t(z))}{\log t}\le\frac{\pi+\Theta}{2\Theta}.$$
    \item[(ii)] If $(\phi_t)$ is of finite shift (and hence $\Theta=\pi$), then for all $z\in\D$
    $$\lim\limits_{t\to+\infty}\frac{d_{\D}(z,\phi_t(z))}{\log t}=1.$$
    \end{itemize}
    
   \begin{proof}
    We start with (i). The left-hand side inequality about the limit infimum can be deduced directly from Theorem \ref{theo-hyper}. For the right-hand side, Theorem \ref{theo-hyper} yields $\frac{d_{\D}(z,\phi_t(z))}{\log t}\le\frac{\pi+\Theta}{2\Theta}+\epsilon+\frac{c_1(z,\epsilon)}{\log t}$, for all $t>1$ and all $\epsilon>0$. This leads to $\limsup_{t\to+\infty}\frac{d_{\D}(z,\phi_t(z))}{\log t}\le\frac{\pi+\Theta}{2\Theta}+\epsilon$, for all $\epsilon>0$ and therefore, we immediately get the desired result. A similar procedure shows that in case (ii), the corresponding limit actually exists and is equal to $1$.
\end{proof}
\end{corollary}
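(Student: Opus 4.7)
The plan is to derive the corollary as an immediate consequence of Theorem \ref{theo-hyper}, with no new geometric input needed. Since that theorem already sandwiches $d_{\D}(z,\phi_t(z))$ between two affine functions of $\log t$ whose slopes are precisely $1$ (on the left) and $\frac{\pi+\Theta}{2\Theta}+\epsilon$ (on the right, for arbitrary $\epsilon>0$), the whole argument boils down to dividing through by $\log t$ and passing to the limit.

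For the left-hand inequality, I would fix $z\in\D$ and invoke the lower bound from Theorem \ref{theo-hyper} with its constant $c_2=c_2(z)$. Writing
$$\frac{d_{\D}(z,\phi_t(z))}{\log t}\;\ge\;1-\frac{c_2}{\log t},\qquad t>1,$$
and letting $t\to+\infty$ yields $\liminf_{t\to+\infty}\frac{d_{\D}(z,\phi_t(z))}{\log t}\ge 1$. The middle inequality is just the tautology $\liminf\le\limsup$.

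For the right-hand inequality, I would fix an arbitrary $\epsilon>0$ and use the upper bound from Theorem \ref{theo-hyper} with its constant $c_1=c_1(z,\epsilon)$:
$$\frac{d_{\D}(z,\phi_t(z))}{\log t}\;\le\;\frac{\pi+\Theta}{2\Theta}+\epsilon+\frac{c_1}{\log t},\qquad t>1.$$
Taking $\limsup$ as $t\to+\infty$ gives $\limsup_{t\to+\infty}\frac{d_{\D}(z,\phi_t(z))}{\log t}\le \frac{\pi+\Theta}{2\Theta}+\epsilon$, and since this holds for every $\epsilon>0$, letting $\epsilon\to 0^+$ completes the proof.

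There is no genuine obstacle here, so the main point to be careful about is bookkeeping: the upper-bound constant $c_1$ depends on $\epsilon$, so one cannot pass to the limits in a single step. The correct order is to first fix $\epsilon$ and let $t\to+\infty$, absorbing the vanishing error $c_1/\log t$, and only afterwards to let $\epsilon\to 0^+$. The hypothesis $\Theta>0$ is also essential, since it ensures that the slope $\frac{\pi+\Theta}{2\Theta}$ appearing in the upper bound is finite.
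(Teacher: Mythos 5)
Your proposal is correct and follows exactly the paper's argument: divide the two-sided bound of Theorem \ref{theo-hyper} by $\log t$, pass to the limit in $t$ for each fixed $\epsilon$, and then let $\epsilon\to0^+$ for the upper estimate. Your remark about the order of limits (first $t\to+\infty$, then $\epsilon\to0^+$, since $c_1$ depends on $\epsilon$) is precisely the one subtlety the paper's proof also handles.
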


Having completed our study on the rate of ``divergence'' of the hyperbolic distance, we may now proceed to the study of the quantity $|\phi_t(z)-\tau|$, where $\tau\in\partial\D$ is the Denjoy--Wolff point of the semigroup $(\phi_t)$. We will first provide a lemma that correlates the quantity $|\phi_t(z)-\tau|$ of any semigroup $(\phi_t)$ of infinite shift (even if it is hyperbolic or parabolic of zero hyperbolic step) with the hyperbolic distance $d_{\D}(z,\phi_t(z))$. Then, this lemma combined with Theorem \ref{theo-hyper}(i) will lead to the proof of Theorem \ref{theo-eucl}(i). On the other hand, Theorem \ref{theo-eucl}(ii) will be derived through Theorem \ref{theo-hyper}(ii) and some further arguments.
\begin{lemma}\label{Lemma}
    Let $(\phi_t)$ be a semigroup of infinite shift in $\D$ with Denjoy-Wolff point $\tau$. Then, for all $z\in\D$ and all $t\ge0$ we have
    $$\frac{1-|z|}{1+|z|}e^{-2d_{\D}(z,\phi_t(z))}\le|\phi_t(z)-\tau|\le 2\frac{|\tau-z|}{1-|z|}e^{-d_{\D}(z,\phi_t(z))}.$$
    \begin{proof}
        We will need several inequalities to reach the desired conclusion. First of all, by the triangle inequality for the hyperbolic distance, we get
        \begin{equation}\label{Lemma4.1.1}
            d_{\D}(z,\phi_t(z))-d_{\D}(0,z)\le d_{\D}(0,\phi_t(z))\le d_{\D}(z,\phi_t(z))+d_{\D}(0,z),
        \end{equation}
    for all $t\ge0$ and all $z\in\D$. In addition, by the very formula of the hyperbolic distance in the unit disk, we quickly see that $(1-|\phi_t(z)|)=e^{-2d_{\D}(0,\phi_t(z))}(1+|\phi_t(z)|)$, which leads to
    \begin{equation}\label{Lemma4.1.2}
        e^{-2d_{\D}(0,\phi_t(z))}\le 1-|\phi_t(z)|\le 2e^{-2d_{\D}(0,\phi_t(z))},
    \end{equation}
    for all $t\ge0$ and all $z\in\D$. On top of that, by the triangle inequality, we have at once $|\phi_t(z)-\tau|\ge 1-|\phi_t(z)|$. On the other hand, since $(\phi_t)$ is of infinite shift, there exists a minimum $R_z$ such that $\phi_t(z)\in\overline{E(\tau,R_z)}$ for all $t\ge0$ ($\phi_0(z)=z$ will be on the boundary of this horodisk, while by Julia's Lemma, every other $\phi_t(z)$ will not leave the horodisk). By the definition of horodisks, this means that $|\phi_t(z)-\tau|^2\le R_z(1-|\phi_t(z)|^2)$. Combining, we get
    \begin{equation}\label{Lemma4.1.3}
        1-|\phi_t(z)|\le|\phi_t(z)-\tau|\le \sqrt{2R_z}\sqrt{1-|\phi_t(z)|},
    \end{equation}
    for all $t\ge0$ and all $z\in\D$. In particular, due to the fact that $z$ lies on the boundary of $E(\tau,R_z)$, it is true that $R_z=\frac{|\tau-z|^2}{1-|z|^2}$. Finally, through a successive application of (\ref{Lemma4.1.3}), (\ref{Lemma4.1.2}) and (\ref{Lemma4.1.1}), we deduce
    $$e^{-2d_{\D}(0,z)}e^{-2d_{\D}(z,\phi_t(z))}\le|\phi_t(z)-\tau|\le2\sqrt{R_z}e^{d_{\D}(0,z)}e^{-d_{\D}(z,\phi_t(z))}.$$
    Keeping in mind that $R_z=\frac{|\tau-z|^2}{1-|z|^2}$ and that $d_{\D}(0,z)=\frac{1}{2}\log\frac{1+|z|}{1-|z|}$, we reach the desired result.
    \end{proof}
\end{lemma}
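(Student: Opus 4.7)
The plan is to bridge the Euclidean quantity $|\phi_t(z)-\tau|$ and the hyperbolic quantity $d_{\D}(z,\phi_t(z))$ through two intermediate quantities: $1-|\phi_t(z)|$ (which links naturally to $d_{\D}(0,\phi_t(z))$) and the horodisk parameter $R_z$ (which links naturally to infinite shift). Throughout I would move freely between $z$ and $0$ using the hyperbolic triangle inequality.

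First, I would record the two elementary facts that do not use infinite shift at all. By the explicit formula $d_{\D}(0,w)=\tfrac12\log\tfrac{1+|w|}{1-|w|}$ one immediately gets
\[
e^{-2d_{\D}(0,\phi_t(z))}\le 1-|\phi_t(z)|\le 2e^{-2d_{\D}(0,\phi_t(z))},
\]
and by the triangle inequality for $d_{\D}$,
\[
d_{\D}(z,\phi_t(z))-d_{\D}(0,z)\le d_{\D}(0,\phi_t(z))\le d_{\D}(z,\phi_t(z))+d_{\D}(0,z).
\]
These two together already show that $1-|\phi_t(z)|$ is comparable to $e^{-2d_{\D}(z,\phi_t(z))}$ up to factors depending only on $d_{\D}(0,z)$, which amounts to the factors $\tfrac{1-|z|}{1+|z|}$ and $(\tfrac{1+|z|}{1-|z|})$ via the explicit value of $d_{\D}(0,z)$.

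The second ingredient — and the main place where the infinite shift hypothesis enters — is the squeeze
\[
1-|\phi_t(z)|\le|\phi_t(z)-\tau|\le\sqrt{2R_z}\sqrt{1-|\phi_t(z)|},
\]
where $R_z=\tfrac{|\tau-z|^2}{1-|z|^2}$. The left inequality is just the triangle inequality $|\phi_t(z)-\tau|\ge 1-|\phi_t(z)|$. For the right inequality I use that, since $(\phi_t)$ is of infinite shift, the orbit of $z$ cannot escape every horodisk at $\tau$; by Julia's lemma the smallest closed horodisk at $\tau$ that contains $z$ on its boundary, namely $\overline{E(\tau,R_z)}$ with the displayed $R_z$, is forward-invariant under all $\phi_t$. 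Hence $|\phi_t(z)-\tau|^2\le R_z(1-|\phi_t(z)|^2)\le 2R_z(1-|\phi_t(z)|)$, giving the claimed square-root bound.

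To finish, I would substitute the comparability of $1-|\phi_t(z)|$ with $e^{-2d_{\D}(0,\phi_t(z))}$ into the horodisk squeeze, and then apply the hyperbolic triangle inequality to replace $d_{\D}(0,\phi_t(z))$ by $d_{\D}(z,\phi_t(z))\pm d_{\D}(0,z)$. Using $d_{\D}(0,z)=\tfrac12\log\tfrac{1+|z|}{1-|z|}$ to convert the exponentials into the prefactors $\tfrac{1-|z|}{1+|z|}$ on the left and $\tfrac{1+|z|}{1-|z|}$ on the right, together with $\sqrt{R_z}=\tfrac{|\tau-z|}{\sqrt{1-|z|^2}}$, gives exactly
\[
\frac{1-|z|}{1+|z|}e^{-2d_{\D}(z,\phi_t(z))}\le|\phi_t(z)-\tau|\le 2\frac{|\tau-z|}{1-|z|}e^{-d_{\D}(z,\phi_t(z))}.
\]
The only non-routine step is pinning down the correct horodisk and the identification $R_z=|\tau-z|^2/(1-|z|^2)$ via Julia's lemma; everything else is algebra with the hyperbolic distance formula and the triangle inequality.
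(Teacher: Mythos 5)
Your proposal is correct and follows essentially the same route as the paper: the explicit formula for $d_{\D}(0,\cdot)$ to compare $1-|\phi_t(z)|$ with $e^{-2d_{\D}(0,\phi_t(z))}$, the hyperbolic triangle inequality to pass from $0$ to $z$, and Julia's lemma to trap the orbit in the closed horodisk through $z$ with $R_z=|\tau-z|^2/(1-|z|^2)$. The final bookkeeping with the prefactors also matches the paper's.
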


\begin{proof}[\bf Proof of Theorem \ref{theo-eucl}]
    (i) A combination of Theorem \ref{theo-hyper}(i) and Lemma \ref{Lemma} leads directly to the desired result.
    
    (ii) By the hypothesis, $(\phi_t)$ is parabolic of positive hyperbolic step. So, the upper bound is already known by \cite{betsparsem} and \cite{Bets-Contreras}. Thus we only need to work towards the lower bound. Since $(\phi_t)$ is also of finite shift, for each $z\in\D$ there exists a maximum $R_z$ such that $\phi_t(z)\notin E(\tau,R_z)$, for all $t\ge0$. This signifies that $|\phi_t(z)-\tau|^2\ge R_z(1-|\phi_t(z)|^2)$ and hence $|\phi_t(z)-\tau|\ge\sqrt{2R_z}\sqrt{1-|\phi_t(z)|}$, for all $t\ge0$ and all $z\in\D$. By the formula for the hyperbolic distance in the unit disk, we observe
\begin{eqnarray*}
    1-|\phi_t(z)|&=&e^{-2d_{\D}(0,\phi_t(z))}(1+|\phi_t(z)|)\\
    &\ge& e^{-2d_{\D}(0,\phi_t(z))}\\
    &\ge&e^{-2d_{\D}(0,z)}e^{-2d_{\D}(z,\phi_t(z))}\\
    &=&\frac{1-|z|}{1+|z|}e^{-2d_{\D}(z,\phi_t(z))},
\end{eqnarray*}
where in the second to last relation we made use of the triangle inequality. Therefore, we are led to
$$|\phi_t(z)-\tau|\ge\sqrt{2R_z}\sqrt{\frac{1-|z|}{1+|z|}}e^{-d_{\D}(z,\phi_t(z))}.$$
Theorem \ref{theo-hyper}(ii) dictates that for each $z\in\D$ there exists some constant $c>0$ depending on $z$ such that
$$d_{\D}(z,\phi_t(z))\le\log t+c,$$
for all $t>1$. Returning to the previous inequality, we understand that
$$|\phi_t(z)-\tau|\ge\sqrt{2R_z}\sqrt{\frac{1-|z|}{1+|z|}}e^{-c}e^{-\log t},$$
for all $t>1$ and all $z\in\D$. Setting $c_1=\sqrt{2R_z}\sqrt{\frac{1-|z|}{1+|z|}}e^{-c}$ which depends on $z$, we obtain the desired rate of convergence.
\end{proof}

\section{ Rates of Convergence - Harmonic Measure}\label{sec:harmmeas}

The current section is devoted to the rate of convergence in terms of the harmonic measure. First, we must carefully select the circular arc $E\subset\partial\D$ with regard to which we estimate the harmonic measure. Since each orbit $\gamma_z$ of the semigroup converges to the Denjoy--Wolff point $\tau$, the selection of $E$ has to occur in such a way that one of its endpoints is $\tau$. An obvious choice is to consider $E$ to be one of the half-circles defined by $\tau$ and $-\tau$. However, the harmonic measure of $\gamma_z(t)$ with respect to any set $B\subset\partial\D$ located ``far'' from $\tau$ tends to $0$, as $t\to+\infty$. As a result, any choice of a circular arc with one endpoint at $\tau$ may work for our purposes. For this reason, we may assume without loss of generality that given $z\in\D$, $E_1$ is the open circular arc corresponding through $h$ to the boundary set $\partial\Omega^-$ containing all the prime ends of $\Omega$ defined by crosscuts with imaginary parts less than $\IM h(z)$. Clearly, we take $E_2$ to be the open circular arc corresponding through $h$ to the boundary set $\partial\Omega^+$ containing all the prime ends of $\Omega$ defined by crosscuts with imaginary parts greater than $\IM h(z)$. For further information on prime ends see \cite[Chapter 2]{pommerenke4}.

\begin{proof}[\bf Proof of Theorem \ref{theo-harm}.]
    Let $h$ be the Koenigs function of $(\phi_t)$ and set $\Omega:=h(\D)$. Fix $z\in\D$. The trajectory of $z$ is mapped through $h$ onto the half-line $\{h(z)+t:t\ge0\}$. Since $(\phi_t)$ has inner argument $\Theta$, for each fixed $\theta\in(0,\Theta)$ there exists $w_\theta\in\Omega$ such that the angular sector $V:=w_{z,\theta}+S_\theta$ is contained inside $\Omega$, while also containing $h(z)+t$, for sufficiently large $t\ge0$. Without loss of generality, we may assume that $h(z)+t\in V$, for all $t\ge0$. Moreover, since $(\phi_t)$ is also parabolic of positive hyperbolic step, there exists a horizontal half-plane $H$ such that $\Omega\subset H$. 

    Through $h$, the circular arcs $E_1$ and $E_2$ correspond (in the sense of prime ends) to the two boundary components $\partial\Omega^-$ and $\partial\Omega^+$, respectively. Then, by the conformal invariance of harmonic measure,
    $$\omega(\phi_t(z),E_2,\D)=\omega(h(z)+t,\partial\Omega^+,\Omega),$$
    for all $t\ge0$. Then, since harmonic measure is a Borel probability measure,
    $$\omega(h(z)+t,\partial\Omega^+,\Omega)=1-\omega(h(z)+t,\partial\Omega^-,\Omega),$$
    because $\partial\Omega^+$ and $\partial\Omega^-$ are disjoint in the sense of prime ends. We denote by $\hat{\partial\Omega^-}$ the set of boundary points in $\partial\Omega$ which correspond to the prime ends in $\partial\Omega^-$. A similar consideration is made for the set $\hat{\partial\Omega^+}$.

    \textbf{Case 1:} $\overline{\hat{\partial\Omega^+}}\cap\overline{\hat{\partial\Omega^-}}=\emptyset$

    Consider $\Omega^*$ to be the simply connected domain with boundary $\partial\hat{\Omega}^+\cup\partial H$. By the maximum principle for harmonic functions, we have
    $$\omega(h(z)+t,\partial\Omega^-,\Omega)\ge\omega(h(z)+t,\partial H,\Omega^*).$$
    Now denote by $\partial V^-$ the horizontal side of the angular sector $V$ and by $\partial V^+$ the other side. Surely, the extension of $\partial V^+$ intersects $\partial H$ at some point $\zeta_0$, thus separating $\partial H$ into two new components. Denote by $\partial H^+$ the one to the right and by $\partial H^-$ the remaining one. Since $\partial H^+\subset\partial H$, it is obvious that
    $$\omega(h(z)+t,\partial H,\Omega^*)\ge\omega(h(z)+t,\partial H^+,\Omega^*).$$

 \begin{figure}
        \centering
        \includegraphics[scale=0.6]{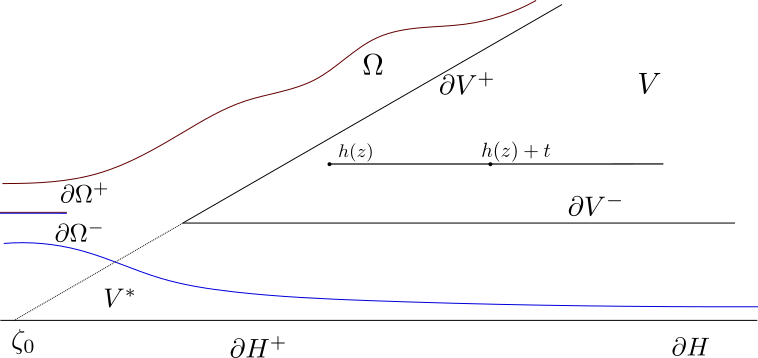}
        \caption{The construction in the proof of Theorem \ref{theo-harm}}
        \label{fig:harmonic}
    \end{figure}

    Lastly, set $V^*$ the angular sector formed by $\partial V^+$, its extension and $\partial H^+$. Clearly, this sector is of angle $\theta$, as well. In fact, $V^*\subset\Omega^*$, while $\partial H^+\subset\partial V^*\cap\partial\Omega^*$. Therefore, by the domain monotonicity property of the harmonic measure, we get
    $$\omega(h(z)+t,\partial H^+,\Omega^*)\ge\omega(h(z)+t,\partial H^+,V^*).$$
    Combining everything, we get 
    $$\omega(h(z)+t,\partial\Omega^-,\Omega)\ge\omega(h(z)+t,\partial H^+,V^*),$$
    which in turn leads to
    $$\omega(h(z)+t,\partial\Omega^+,\Omega)\le1-\omega(h(z)+t,\partial H^+,V^*),$$
    for all $t\ge0$. Then, again through conformal invariance, we have
    \begin{eqnarray*}
        \omega(h(z)+t,\partial H^+,V^*)&=&\omega(h(z)+t-\zeta_0,\partial H^+-\zeta_0,V^*-\zeta_0)\\
        &=&\omega(h(z)+t-\zeta_0,[0,+\infty),\{w\in\mathbb{C}:0<\arg w<\theta\})\\
        &=&\frac{\theta-\arg(h(z)+t-\zeta_0)}{\theta}\\
        &=&\frac{\theta-\arctan\frac{y}{x+t}}{\theta},
    \end{eqnarray*}
    where $h(z)-\zeta_0=x+iy$ and $y>0$. As a result,
    $$\omega(h(z)+t,\partial\Omega^+,\Omega)\le1-\frac{\theta-\arctan\frac{y}{x+t}}{\theta}=\frac{\arctan\frac{y}{x+t}}{\theta},$$
    for all $t\ge0$. Multiplying by $t$ and taking limits, we are led to
    $$\limsup\limits_{t\to+\infty}\left(t\cdot\omega(\phi_t(z),E_2,\D)\right)\le\lim\limits_{t\to+\infty}\frac{t\cdot\arctan\frac{y}{x+t}}{\theta}=\frac{y}{\theta}\in(0,+\infty),$$
    for every $\theta\in(0,\Theta)$. However, we understand that $y=\text{dist}(h(z),\partial H)$ which remains the same, regardless of $\theta$. So, we can get 
    $$\limsup\limits_{t\to+\infty}\left(t\cdot\omega(\phi_t(z),E_2,\D)\right)\le\frac{\text{dist}(h(z),\partial H)}{\Theta},$$
    which proves the desired result.

    \textbf{Case 2:} $\overline{\hat{\partial\Omega^+}}\cap\overline{\hat{\partial\Omega^-}}=\{w_0\}$, a unique boundary point

    In this case, following a similar procedure, we consider $\Omega^*$ to be the simply connected domain bounded by $\partial\hat{\Omega}^+$, the half-line $\{w_0-t:t\ge0\}$ and $\partial H$. Then, the proof follows exactly the same steps, as before.

    \textbf{Case 3:} $\overline{\hat{\partial\Omega^+}}\cap\overline{\hat{\partial\Omega^-}}$ is more than a singleton (it could be a horizontal rectilinear segment or a horizontal half-line)

    In this case, we either enlarge or shrink one of the boundary components $\partial\hat{\Omega}^+,\partial\hat{\Omega}^-$ suitably, until their intersection in terms of boundary points is either empty or a singleton. Then, we continue as in the previous two cases.

    So, in any case, $\limsup_{t\to+\infty}\left(t\cdot\omega(\phi_t(z),E_2,\D)\right)\le\frac{\text{dist}(h(z),\partial H)}{\Theta}$.
\end{proof}

A natural first question is whether the rate in Theorem \ref{theo-harm} is the best possible in general. Again the answer is affirmative and we may see this through the following example.

\begin{example}
    Consider $\Omega=\{w:\IM w>-1\}\setminus\{w:\RE w\le-1,\IM =-\frac{1}{2}\}$. This is a simply connected and convex in the positive direction domain. Therefore, there exists a non-elliptic semigroup $(\phi_t)$ in $\D$ whose Koenigs domain is $\Omega$. Let $h:\D\to\Omega$ be the corresponding Koenigs function and recall that $h(0)=0$. In fact, $\Omega$ is contained inside a horizontal half-plane. This implies that the associated semigroup $(\phi_t)$ is parabolic of positive hyperbolic step. Furthermore, $\Omega$ also contains a horizontal half-plane and therefore $(\phi_t)$ must be of finite shift. Set 
    $$\partial\Omega^+=\left\{w:\RE w\le-1,\IM w=-\frac{1}{2}\right\} \quad\text{and}\quad\partial\Omega^-=\{w:\IM w=-1\}.$$
    As before, consider $E_1$ and $E_2$ to be the circular arcs corresponding through $h^{-1}$ to $\partial\Omega^-$ and $\partial\Omega^+$, respectively. A different selection would lead to the same result, but we pick the above configuration for the sake of convenience. Next, recall that $H_{-1}=\{w:\IM w>-1\}$ and $H_{-\frac{1}{2}}=\{w:\IM w>-\frac{1}{2}\}$. Obviously, $H_{-\frac{1}{2}}\subset\Omega\subset H_{-1}$. 

    We deal with the trajectory of $0$. By conformal invariance, $\omega(\phi_t(0),E_2,\D)=\omega(h(0)+t,\partial\Omega^+,\Omega)=\omega(t,\partial\Omega^+,\Omega)$. Certainly, $\partial\Omega^+\subset\partial\Omega\cap\partial H_{-\frac{1}{2}}$. As a result, by the domain monotonicity property of the harmonic measure, $\omega(t,\partial\Omega^+,\Omega)\ge\omega(t,\partial\Omega^+,H_{-\frac{1}{2}})$. Then, through known conformal mappings which leave the harmonic measure invariant, we find
    \begin{eqnarray*}
        \omega\left(t,\partial\Omega^+,H_{-\frac{1}{2}}\right)&=&\omega\left(t+\frac{i}{2},(-\infty,-1],\mathbb{H}\right)\\
        &=&\omega\left(t+1+\frac{i}{2},(-\infty,0],\mathbb{H}\right)\\
        &=&\frac{\arg(t+1+\frac{i}{2})-0}{\pi-0},
    \end{eqnarray*}
    for all $t\ge0$. Therefore, multiplying by $t$ and taking limits, we get
    $$\liminf\limits_{t\to+\infty}\left(t\cdot\omega(t,\partial\Omega^+,\Omega\right)\ge\lim\limits_{t\to+\infty}\frac{t\cdot\arctan\frac{1}{2(t+1)}}{\pi}=\frac{1}{2\pi},$$
    which provides the desired sharpness.
\end{example}

 Finally, one second question regarding this rate in terms of harmonic measure is whether it can be generalized for all parabolic semigroups of positive hyperbolic step, even if the inner argument is $0$. This time, the answer is negative. To give evidence of this fact, we construct one final example that demonstrates how a semigroup of infinite shift with $\Theta=0$ may fail to behave accordingly.

 \begin{example}
      Consider $$\Omega=\{w:\IM w>-1\}\setminus\bigcup_{n=1}^{+\infty}\{w:\RE w\le2^{2^n},\IM w=2^n\log2-1\}.$$ Clearly $\Omega$ is simply connected and convex in the positive direction. Hence, $\Omega$ can be regarded as the Koenigs domain of some non-elliptic semigroup $(\phi_t)$, contained also inside a horizontal half-plane. In addition, setting $a_n:=2^{2^n}+i(2^n\log2-1)$, we construct the sequence $\{a_n\}$ of the tip-points of the half-lines which form the upper boundary of $\Omega$. In fact, $\lim_{n\to+\infty}\IM a_n=+\infty$, and therefore $\Omega$ is not contained in any horizontal strip. So, $(\phi_t)$ is parabolic of positive hyperbolic step. On top of that, one can effortlessly check that $\Omega$ contains no horizontal angular sector. Hence $(\phi_t)$ is of infinite shift and has inner argument $\Theta=0$.

    Let $h$ denote the associated Koenigs function of $(\phi_t)$. Then, the half-line $[0,+\infty)$ is the image through $h$ of the trajectory of $0$, since $h(0)=0$. We set $\partial\Omega^-=\{w:\IM w=-1\}$, $\partial\Omega^+=\partial\Omega\setminus\partial\Omega^-$, $E_1=h^{-1}(\partial\Omega^-)$ and $E_2=h^{-1}(\partial\Omega^+)$. Then, as before, 
    $$\omega(\phi_t(0),E_2,\D)=\omega(h(0)+t,\partial\Omega^+,\Omega)=\omega(t,\partial\Omega^+,\Omega).$$
    Consider 
    $$t_n=\frac{2^{2^{n+1}}+2^{2^n}}{2}=\frac{2^{2^n}(2^{2^n}+1)}{2}=2^{2^n-1}(2^{2^n}+1).$$
    Then, $\{t_n\}\subset\mathbb[0,+\infty)$, $\lim_{n\to+\infty}t_n=+\infty$ and we deal with the quantity $\lim_{n\to+\infty}\omega(t_n,\partial\Omega^+,\Omega)$.

    Next, consider the rectangles $S_n=(2^{2^n},2^{2^{n+1}})\times(-1,2^{n+1}\log2-1)$ which satisfy $S_n\subset\Omega$, for all $n\in\mathbb{N}$. We denote by $U_n$ the upper side of $S_n$, by $D_n$ its lower side, by $R_n$ its right side and by $L_n$ its left side. Evidently $U_n\subset\partial\Omega^+$ and $D_n\subset\partial\Omega^-$, for all $n\in\mathbb{N}$. By construction, $t_n\in S_n$ for each $n\in\mathbb{N}$. Then, by known properties of harmonic measure, we see that
    $$\omega(t_n,\partial\Omega^+,\Omega)\ge\omega(t_n,U_n,\Omega)\ge\omega(t_n,U_n,S_n).$$
    We understand that the Euclidean distance of $t_n$ from the side $D_n$ is always equal to $-1$, while $\text{dist}(t_n,U_n)$ tends to $+\infty$, as $n\to+\infty$. Furthermore, the distance of $t_n$ from any of the vertical sides $R_n$, $L_n$, is equal to $\frac{2^{2^{n+1}}-2^{2^n}}{2}=2^{2^n-1}(2^{2^n}-1)$, which again diverges to $+\infty$. In addition, we may compute that
    $$\frac{\text{dist}(t_n,U_n)}{\text{dist}(t_n,R_n)}=\frac{\text{dist}(t_n,U_n)}{\text{dist}(t_n,L_n)}\to0\quad\text{and}\quad\frac{\text{length}(U_n)}{\text{length}(R_n)}=\frac{\text{length}(D_n)}{\text{length}(R_n)}\to+\infty,$$
    as $n\to+\infty$.
    As a consequence, through the shape and geometry of the rectangle $S_n$, we have $\omega(t_n,D_n,S_n)\to1$, while $\omega(t_n,R_n,S_n),\omega(t_n,L_n,S_n),\omega(t_n,U_n,S_n)\to0$, with the latter converging to $0$ with the slowest rate. So, for sufficiently large $n$, $\omega(t_n,R_n,S_n)\le\omega(t_n,U_n,S_n)$.

    Finally, let $\Sigma_n$ be the horizontal strip created through $S_n$ by extending the horizontal sides $U_n,D_n$. Set $\partial\Sigma_n^+$ and $\partial\Sigma_n^-$ the upper and lower, respectively, horizontal lines constituting the boundary of $\Sigma_n$. Then, $S_n\subset\Sigma_n$, whereas $U_n\subset\partial\Sigma_n^+$. Therefore,
    \begin{equation}\label{eq1}
    \omega(t_n,\partial\Sigma_n^+,\Sigma_n)=\omega(t_n,U_n,\Sigma_n)+\omega(t_n,\partial\Sigma_n^+\setminus U_n,\Sigma_n).
    \end{equation}
    By the maximum principle for harmonic functions, we see that
    \begin{equation}\label{eq2}
        \omega(t_n,\partial\Sigma_n^+\setminus U_n,\Sigma_n)\le\omega(t_n,R_n\cup L_n,S_n)=2\omega(t_n,R_n,S_n).
    \end{equation}
    In addition, by the Strong Markov Property for the harmonic measure
    \begin{eqnarray*}
        \omega(t_n,U_n,\Sigma_n)&=&\omega(t_n,U_n,S_n)+\int\limits_{\partial S_n\setminus\partial\Sigma_n}\omega(\zeta,U_n,\sigma_n)\cdot\omega(t_n,d_\zeta,S_n)\\
        &\le&\omega(t_n,U_n,S_n)+\omega(t_n,R_n\cup L_n,S_n)\\
        &=&\omega(t_n,U_n,S_n)+2\omega(t_n,R_n,S_n).
    \end{eqnarray*}

    Combining everything together, we have
    \begin{eqnarray*}
        \omega(t_n,\partial\Sigma_n^+,\Sigma_n)&\le&\omega(t_n,U_n,S_n)+2\omega(t_n,R_n,S_n)+2\omega(t_n,R_n,S_n)\\
        &\le&5\omega(t_n,U_n,S_n).
    \end{eqnarray*}
    But by known conformal mappings, we easily calculate $\omega(t_n,\partial\Sigma_n^+,\Sigma_n)=\frac{1}{2^{n+1}\log2}$ and therefore
    $$\omega(t_n,U_n,S_n)\ge\frac{1}{5\cdot2^{n+1}\log2}=\frac{1}{10\log2^{2^n}}\ge\frac{1}{10\log t_n},$$
    since $t_n\in(2^{2^n},2^{2^{n+1}})$. As a result, $\log t_n\cdot\omega(t_n,U_n,S_n)\ge \frac{1}{10}$ and going back, we understand that $\limsup_{t\to+\infty}(\log t\cdot\omega(\phi_t(0),E_2,\D))\ge \frac{1}{10}$, which in turn implies $\lim_{t\to+\infty}(t\cdot\omega(\phi_t(0),E_2,\D))=+\infty$.
 \end{example}

 \begin{remark}
    Studying the rate of convergence in terms of the Euclidean or the hyperbolic distance seems natural, whereas the ``harmonic rate'' might look abstract. However, the rate in terms of harmonic measure provides a great advantage in contrast to the other rates. Given a non-elliptic semigroup $(\phi_t)$ with Denjoy-Wolff point $\tau\in\partial\D$, we have $|\phi_t(z)-\tau|\to0$ and $d_{\D}(z,\phi_t(z))\to+\infty$, for all $z\in\D$. So the limit itself does not give any information, only the rate is relevant. On the other side, the limit of $\omega(\phi_t(z),E_2,\D)$ might not be the same for every $z\in\D$, while its actual value shows the angle by which $\gamma_z$ converges to $\tau$. Therefore, when looking at the inequality $\omega(\phi_t(z),E_2,\D)\le\frac{c}{t}$, without any assumption made on the semigroup, we not only find out the rate of the convergence, but we also understand that $\gamma_z$ converges tangentially to $\tau$.
\end{remark}

\section*{Declaration of competing interest}
None.

\section*{Acknowledgments}
We express our gratitude to Alan Sola for his suggestions and careful consideration of this work. Moreover, we thank Davide Cordella for his indications on the rate of convergence for semigroups of finite shift. 

\bibliographystyle{plain}

\end{document}